\newtheorem{theorem}{Theorem}\numberwithin{theorem}{section}
\newtheorem{defn}[theorem]{Definition}
\newtheorem{ex}[theorem]{Example}
\newtheorem{prop}[theorem]{Proposition}
\newtheorem{lem}[theorem]{Lemma} 
\newtheorem{cor}[theorem]{Corollary}
\newtheorem{rem}[theorem]{Remark}
\newtheorem{conj}[theorem]{Conjecture}
\numberwithin{equation}{section}
\def\rank{\operatorname{rank}}
\def\L{\mathbb L}
\def\b{\bm{b}} 
\def\x{\bm{x}} 
\def\y{\bm{y}}
\def\ox{\overline{x}}  
\def\ou{\overline{u}}
\newcommand{\Q}{\mathbb{Q}}
\newcommand{\Z}{\mathbb{Z}}
\newcommand{\F}{\mathcal{F}}
\newcommand{\ol}{\overline}
\newcommand{\twoheadlongrightarrow}{\relbar\joinrel\twoheadrightarrow}
\title
{
   The universal factorial Hall--Littlewood $P$- and $Q$-functions
}
\author{Masaki Nakagawa   and Hiroshi Naruse}
\dedicatory{Dedicated to the memory of  Professor Piotr Pragacz}
\address{Graduate School of  Education \endgraf
                       Okayama University \endgraf
                       Okayama  700-8530 \\ Japan 
}
\email{nakagawa@okayama-u.ac.jp}
\address{Graduate School of Education \endgraf 
                        University of Yamanashi   \endgraf
                       Kofu  400-8510 \\ Japan
}
\email{hnaruse@yamanashi.ac.jp}
\subjclass[2010]{05E05,   
55N20, 55N22, 57R77}
\keywords{Factorial Hall--Littlewood $P$- and $Q$-functions, Generating functions, 
Formal group laws, Complex cobordism  theory, Gysin formulas}
\begin{document}

\begin{abstract}
     In this paper, we introduce   {\it factorial}   analogues of   the ordinary 
    Hall--Littlewood $P$- and $Q$-polynomials,  which  we call 
   the {\it factorial Hall--Littlewood $P$- and $Q$-polynomials}. 
    Using the {\it universal} formal group law,  we further generalize
    these polynomials  to the {\it universal factorial Hall--Littlewood 
   $P$- and $Q$-functions}.   
    We show that these functions satisfy  the {\it vanishing property} 
    which   the ordinary factorial Schur $S$-, $P$-, and $Q$-polynomials 
    have.  By the vanishing property, we derive the Pieri-type formula
    and a certain generalization of the classical hook formula.      
      We then  characterize  our functions  in terms of Gysin maps from flag bundles 
    in the complex cobordism theory. 
  Using this characterization and Gysin formulas for flag bundles, 
      we can obtain  generating functions for  the  universal   factorial 
     Hall--Littlewood $P$- and $Q$-functions.      
        Using our generating functions, we can show that 
       our factorial Hall--Littlewood $P$- and $Q$-polynomials 
      have a certain {\it cancellation property}. 
      Further applications such as   Pfaffian formulas for 
          $K$-theoretic  factorial $Q$-polynomials  are also  given.  
\end{abstract}

\maketitle


\section{Introduction}     \label{sec:Introduction}  
Let $\x_{n} = (x_{1}, \ldots, x_{n})$ and $t$ be independent indeterminates over 
$\Z$, and $\lambda = (\lambda_{1}, \ldots, \lambda_{n})$   
 a partition of length $\leq n$.    Then the ordinary {\it Hall--Littlewood} $P$- and 
$Q$-{\it polynomials}, denoted  by $P_{\lambda}(\x_{n} ;t)$ and $Q_{\lambda}(\x_{n}; t)$ 
respectively,  are  symmetric polynomials
with coefficients in $\Z [t]$.   
   When $t = 0$,  both the polynomials $P_{\lambda}(\x_{n}; t)$ and $Q_{\lambda}(\x_{n}; t)$ 
 reduce to the 
ordinary Schur  ($S$-)  polynomial $s_{\lambda}(\x_{n})$, and when $t = -1$,  
to the ordinary 
Schur $P$-polynomial  $P_{\lambda}(\x_{n})$ and $Q$-polynomial $Q_{\lambda}(\x_{n})$
respectively. 
   Thus the polynomials $P_{\lambda}(\x_{n}; t)$, $Q_{\lambda}(\x_{n}; t)$ 
serve to interpolate between the Schur  polynomials and the Schur $P$- and 
$Q$-polynomials, 
 and play a crucial role in the  symmetric function theory, representation theory, and
combinatorics.   In the context of {\it Schubert calculus}, 
it is well-known  that  the ordinary Schur  $S$-, $P$-, and $Q$-polynomials 
appear as  the Schubert classes in the ordinary cohomology rings 
of  the various Grassmannians (Fulton \cite[\S 9.4]{Fulton1997}, 
 Pragacz \cite[\S 6]{Pragacz1991}).    Moreover, their {\it factorial} analogues, 
namely, the factorial Schur $S$-, $P$-, and $Q$-polynomials play an analogous 
role in {\it equivariant} Schubert calculus (Knutson--Tao \cite{Knutson-Tao2003},  
Ikeda \cite{Ikeda2007},  Ikeda--Naruse \cite{Ikeda-Naruse2009}).  
Although the Hall--Littlewood polynomials have  no 
obvious   geometric meaning at present,  they deserve to be 
considered in terms of geometry as well.  
In fact,    in \cite{Totaro2003}, Totaro 
considered the {\it coinvariant ring}  $F(e, n)$  of the complex reflection group 
$G(e, 1, n)  =  \Z/e\Z \wr S_{n}$ (the wreath product) for $e \geq 2$, and 
suggested to think of the ring $F(e, n)$ as the cohomology of  a certain ``flag manifold''. 
Then he considered a  subring $C(e, n)$ of $F(e, n)$,  and described a basis for the ring  
$C(e, n)$ given by the Hall--Littlewood $Q$-polynomials.  
For $e = 2$, the inclusion $C(2, n) \subset F(2, n)$ is the inclusion of the 
cohomology of the Lagrangian Grassmannian in that of the isotropic flag manifold 
of  the symplectic group,   and   Totaro's result is interpreted as a   generalization of the classical 
result in Schubert calculus for Lagrangian Grassmannians 
(J\'{o}zefiak \cite{Jozefiak1991},  Pragacz \cite[\S 6]{Pragacz1991}). 
It is natural to consider a generalization of the above theory to the {\it double coinvariant rings} 
(or {\it equivariant coinvariant rings}) of complex reflection groups 
(cf.  recent work of McDaniel \cite{McDaniel2016}). 
From a geometric or topological  point of view,   one expects that 
these rings  would be  related 
to {\it torus-equivariant cohomology} of  certain ``flag manifolds'', 
and factorial version of the Hall--Littlewood polynomials 
would   play a crucial role.   Moreover we notice that 
all the  results stated above  
are formulated in the ordinary cohomology theory $H^{*}(-)$. 
 In topology, it is classical that 
a {\it complex-oriented generalized cohomology theory} $h^{*}(-)$ gives rise to 
a {\it formal group law} $F^{h}(u, v)$ over the coefficient ring $h^{*} := h^{*}(\mathrm{pt})$, 
where $\mathrm{pt}$ is a single point.  
Three typical examples are the ordinary cohomology theory $H^{*}(-)$, 
the (topological) complex $K$-theory $K^{*}(-)$, and the complex cobordism 
theory $MU^{*}(-)$,  which correspond to 
the {\it additive}  formal group law $F_{a} (u, v) = u + v$, 
 the {\it multiplicative} formal group law 
$F_{m}(u, v) = u \oplus v = u + v - \beta uv$, and 
 the {\it universal} formal group law 
$F_{\L}(u, v) = u +_{\L} v$, respectively.  
By the classical result of  Quillen \cite[Proposition 1.10]{Quillen1971},
the complex cobordism theory is {\it universal} among all complex-oriented 
generalized cohomology theories.   
 Therefore it is  also quite natural to ask  whether one can generalize 
the above results formulated in the ordinary cohomology theory 
to the complex cobordism theory.

Motivated by these facts and the above preceding results, 
in this paper, we introduce   {\it factorial}  and {\it universal} analogues of 
 the ordinary  Hall--Littlewood $P$- and $Q$-polynomials,   which we call 
   the {\it universal  factorial Hall--Littlewood $P$- and $Q$-functions}
(For notation, see \S \ref{subsec:UFGL}):  
\begin{defn} 
[Definition \ref{def:DefinitionUFH-LPQ},  cf. Naruse \cite{Naruse2017}]  
For a sequence of positive integers 
 $\lambda = (\lambda_{1}, \ldots, \lambda_{r})$ with $r  \leq n$,  
we define 
\begin{equation*}   
\begin{array}{lll} 
   HP^{\L}_{\lambda} (\x_{n}; t|\b)  
      :=     \displaystyle{\sum_{\overline{w}  \in S_{n}/(S_{1})^{r} \times S_{n-r}}}   
          w \cdot   \left [  
                                  [\x|\b]_{\L}^{\lambda}   \prod_{i=1}^{r}   
                                                     \prod_{j = i + 1}^{n}  
                                                      \dfrac{x_{i} +_{\L}  [t]  (\overline{x}_{j})} 
                                                                 {x_{i} +_{\L}  \overline{x}_{j} } 
                        \right ],   
\medskip    \\   
 HQ^{\L}_{\lambda} (\x_{n}; t|\b)  
      :=     \displaystyle{\sum_{\overline{w}  \in S_{n}/(S_{1})^{r} \times S_{n-r}}}   
          w \cdot   \left [  
                                  [[\x; t|\b]]_{\L}^{\lambda}   \prod_{i=1}^{r} 
                                                     \prod_{j = i + 1}^{n}  
                                                      \dfrac{x_{i} +_{\L}  [t] (\overline{x}_{j})} 
                                                                 {x_{i} +_{\L}  \overline{x}_{j} } 
                        \right ].    
\end{array} 
\end{equation*} 
\end{defn}  
\noindent
To the best of our knowledge,  
even  a factorial version of the ordinary Hall--Littlewood 
polynomials  has not appeared in the literature. 
Here we emphasize the importance of  these factorial  Hall--Littlewood  polynomials. 
 In fact, they  will be needed in describing the torus-equivariant cohomology 
of  {\it $p$-compact flag variety} corresponding to $G(e, 1, n)$
(cf. recent work of Ortiz \cite{Ortiz2015}).  
In this context,  the ``deformation parameters''  $\bm{b}$ are interpreted as the 
torus-equivariant parameters.
We will discuss this new aspect of the Hall--Littlewood functions 
in more detail  
in our forthcoming paper \cite{Nakagawa-Naruse2022}.

Then,   we show that our factorial Hall--Littlewood $P$- and $Q$-functions 
have the so-called {\it vanishing property} 
 (see Propositions \ref{prop:VanishingProperty(Cohomology)},  \ref{prop:VanishingProperty(Cobordism)}).  
We emphasize that this vanishing property will be useful in describing the so-called 
{\it GKM description} of the torus-equivariant cohomology ring of  the $p$-compact 
flag variety corresponding to $G(e, 1, n)$ (\cite{Nakagawa-Naruse2022}). 
By the vanishing property,  we can derive a Pieri-type formula for factorial 
Hall--Littlewood $P$-polynomials (see Proposition \ref{prop:Pieri-typeFormulaFH-LP}). 
Moreover, a simple recursive argument based on the associativity of factorial Hall--Littlewood 
$P$-polynomials, we can derive  a certain generalization of the hook formula 
(see Proposition \ref{prop:OurHookFormula}).  
We then   give a characterization of  them  in terms of Gysin maps from full flag bundles 
    in the complex cobordism theory (Proposition \ref{prop:CharacterizationUFH-LPQ}). 
 Using this characterization, we  can derive  {\it generating functions} for 
the universal factorial Hall--Littlewood $P$- and $Q$-functions. 
The idea of getting   our  result is to apply the Gysin formula 
for a projective bundle repeatedly to the full flag bundle 
since a full flag bundle is constructed as a sequence of projective bundles. 
However, the existence of the  deformation parameter
$\b = (b_{1}, b_{2},\ldots)$ prevent us from a direct application of the Gysin formula.  
To circumvent this difficulty, we developed  a specific modification in each step
(for details, see \S \ref{sec:GFUFH-LPQ}).  
Then, carrying out an argument carefully, we succeeded in getting  
the required result.  
For a sequence of positive integers  $\lambda = (\lambda_{1}, \ldots, \lambda_{r})$ with 
$r \leq n$, we set  
\begin{equation*} 
\begin{array}{llll} 
  &  \widetilde{\mathcal{HP}}^{\L, (n)}_{i, \lambda_{i}}(u_{1}, u_{2},  \ldots, u_{i}|\bm{b})
   :=   \dfrac{u_{i}} {u_{i} +_{\L} [t] (\ou_{i})} 
                  \cdot  \dfrac{1}{\mathscr{P}^{\L} (u_{i})}     \medskip \\
 &   \hspace{3.3cm}   \times \left (  
             \displaystyle{\prod_{j=1}^{n}}   \dfrac{u_{i} +_{\L}  [t] (\ox_{j})} 
                                                                        {u_{i} +_{\L}  \ox_{j}} 
                   \prod_{j=1}^{i-1}   \dfrac{u_{i} +_{\L}  \ou_{j}}  {u_{i} +_{\L}  [t]  (\ou_{j})}
                      \prod_{j=1}^{\lambda_{i}} \dfrac{u_{i} +_{\L}  b_{j}} {u_{i}}   
                               -                   
                                         t^{n  - i + 1}  
                                        \prod_{j=1}^{\lambda_{i}} \dfrac{b_{j}} {u_{i}} 
                                              \right ),      \medskip  \\ 
  &     \widetilde{\mathcal{HP}}^{\L, (n)}_{\lambda}  (\bm{u}_{r}|\bm{b}) 
       =    \widetilde{\mathcal{HP}}^{\L, (n)}_{\lambda} (u_{1},  u_{2}, \ldots, u_{r}|\bm{b}) 
      :=   \displaystyle{\prod_{i=1}^{r}}    
                \widetilde{\mathcal{HP}}^{\L, (n)}_{i, \lambda_{i}}(u_{1}, u_{2}, \ldots, u_{i}|\bm{b}).  \medskip 
\end{array} 
\end{equation*} 
Then,  our main result in this paper is given  as follows: 
\begin{theorem} [Theorem \ref{thm:GFUFH-LP}]    
The universal factorial Hall--Littlewood $P$-function 
$HP^{\L}_{\lambda}(\x_{n}; t|\b)$ is  the 
coefficient of $\bm{u}^{-\lambda} =  u_{1}^{-\lambda_{1}} u_{2}^{-\lambda_{2}} \cdots u_{r}^{-\lambda_{r}}$ 
in $\widetilde{\mathcal{HP}}^{\L, (n)}_{\lambda}(u_{1},  u_{2}, \ldots, u_{r}|\bm{b})$.  Thus 
\begin{equation*} 
         HP^{\L}_{\lambda}(\x_{n}; t|\b) 
    =  [\bm{u}^{-\lambda}]  \left ( 
                                                \widetilde{\mathcal{HP}}^{\L, (n)}_{\lambda}   (\bm{u}_{r}|\bm{b})  
                                  \right ). 
\end{equation*}   
\end{theorem}  
\noindent
Using similar, but simpler technique, we can also obtain a generating function 
for $HQ^{\L}_{\lambda}(\bm{x}_{n}; t|\bm{b})$ (see Theorem \ref{thm:GFUFH-LQ}).   
Here we stress the usefulness of a technique of generating functions. 
For instance, it is easy to derive 
Pfaffian formulas for factorial $K$-theoretic $Q$-polynomials in a simple and uniform manner  
(see Theorem \ref{thm:PfaffianFormulaGQ}).  
Moreover,  a certain {\it cancellation property} (cf.  Pragacz \cite[\S 2]{Pragacz1991}) 
of the factorial Hall--Littlewood 
$P$- and $Q$-polynomials can be verified immediately  (see Proposition \ref{prop:e-CancellationProperty}).     
For further applications of generating functions such as the so-called 
Pieri rule for $K$-theoretic $P$- and $Q$-polynomials, 
see also Naruse \cite{Naruse2017}.

\subsection{Organization of the paper}
The paper is organized as follows:  In Section \ref{sec:NotationConventions},  
we prepare notation and conventions 
concerning  the universal formal group law,  
a Gysin formula for a projective bundle,  which will be used throughout the paper.  
In Section \ref{sec:UFH-LPQ},  the universal factorial 
Hall--Littlewood $P$- and $Q$-functions are  introduced, and 
a characterization of them by means of  a Gysin map is given.  
The vanishing property of these functions are also discussed. 
By the vanishing property, a Pieri-type formula and a generalization of the hook formula 
are derived. 
 Using Gysin formulas for flag bundles  
and  characterizations of  
the  Hall--Littlewood functions by means of 
Gysin maps,  in   Section \ref{sec:GFUFH-LPQ},    
we  obtain generating functions for   these 
universal factorial Hall--Littlewood  functions.  
In Section \ref{sec:ApplicationGF},  
using our generating functions, we  shall show that the factorial 
Hall--Littlewood $P$- and $Q$-polynomials satisfy certain 
cancellation property. 
Pfaffian formulas for  factorial  
 $K$-theoretic $Q$-polynomials    can be obtained  as a by-product.    
In Appendix (Section \ref{sec:Appendix}),  we deal with   the topic 
closely related to the current work, namely, 
generating functions for  the {\it dual Grothendieck polynomials}
and the {\it dual $K$-theoretic Schur  $Q$-polynomials}.

\vspace{0.3cm}  

\textbf{Acknowledgments.} \quad    
The  first author is partially supported by the Grant-in-Aid for Scientific  Research 
          (C)  18K03303,   Japan Society for  the Promotion of Science. 
   The second author is partially supported by the Grant-in-Aid for Scientific  Research  (B) 16H03921, 
          Japan Society for  the Promotion of Science.   
Finally, the authors dedicate this work to the memory of  Professor Piotr Pragacz
who sadly passed away on October 25th, 2022.  Our work is greatly  inspired by his 
foundational works \cite{Pragacz1991}, \cite{Pragacz2015}.

\section{Notation, conventions, and  preliminary results
}  \label{sec:NotationConventions} 
For notation and conventions, we shall follow those used in our previous papers 
\cite{Nakagawa-Naruse2018},  \cite{Nakagawa-Naruse2019(arXiv)}.  
However,  to make the exposition self-contained  as much as possible, 
 we collect some of them  frequently  used in this paper.  
\subsection{Lazard ring $\L$ and the universal formal group law $F_{\L}$}    \label{subsec:UFGL}   
Let 
\begin{equation*} 
F_{\L} (u,v) =   u + v + \sum_{i,j \geq 1} a^{\L}_{i,j} u^{i}  v^{j}   \in    \L[[u,v]] 
\end{equation*} 
be the {\it universal formal group law}, where  $\L$ is   the {\it Lazard ring}.  
Namely, $F_{\L} (u, v)$  
is a formal power series in two indeterminates $u$,  $v$ with coefficients $a^{\L}_{i,j} \in \L$ 
which satisfies the axioms of the  formal group law.   
For the universal formal group law, we shall use the following notation: 
\begin{equation*} 
\begin{array}{llll} 
  &  u  +_{\L}    v =  F_{\L}(u,  v)   \quad  & \text{(formal sum)}, \medskip \\ 
  &  \overline{u} =   [-1]_{\L} (u)  = \chi_{_{\L}}(u)  & \text{(formal  inverse of} \;   u),  \medskip  \\
  &   u -_{\L} v =  u +_{\L}  [-1]_{\L}(v) = u +_{\L}  \overline{v}   &  \text{(formal subtraction)}. 
\medskip  
\end{array}
\end{equation*}
Furthermore,     we define $[0]_{\L}(u) := 0$, and inductively,  
   $[n]_{\L}(u)  :=  [n-1]_{\L}(u) +_{\L} u$
for a positive integer  $n \geq 1$.  We also define   
 $[-n]_{\L}(u) := [n]_{\L}([-1]_{\L}(u))$ for $n \geq 1$.     
We call $[n]_{\L}(u)$ the {\it $n$-series} in the sequel.    
Denote by  $\ell_{\L}  (u)   \in \L \otimes \Q [[u]]$ the  {\it logarithm} 
of  $F_{\L}$, i.e.,  a unique formal power series with leading term $u$ 
such that   
\begin{equation*} 
\ell_{\L}(u +_{\L} v)  =   \ell_{\L} (u) + \ell_{\L} (v).   
\end{equation*} 
Using the logarithm $\ell_{\L}(u)$, one can rewrite the $n$-series $[n]_{\L}(u)$ for a non-negative integer $n$   as  
$\ell_{\L}^{-1}  (n \cdot \ell_{\L} (u))$, 
where $\ell_{\L}^{-1}(u)$ is the formal power series  inverse to $\ell_{\L}(u)$.   
This formula allows us to define 
\begin{equation*} 
   [t]_{\L}(x)  =   [t](x)  :=  \ell_{\L}^{-1} (t  \cdot \ell_{\L} (x))
\end{equation*} 
 for an  indeterminate $t$.  This is a natural extension of  $t \cdot x$ as well as 
the $n$-series  $[n]_{\L} (x)$.

Next we shall introduce  various generalizations of  the ordinary power 
of variables.    
 Let $\bm{x} = (x_{1}, x_{2}, \ldots)$ be a countably infinite  sequence of independent 
variables.    We also introduce  another set of  independent variables $\bm{b} = (b_{1}, b_{2}, \ldots)$. 
 Then, for a positive  integer $k\geq 1$, we define a generalization of the 
ordinary $k$-th power $x^{k}$  of one   variable $x$ 
 by 
\begin{equation*} 
   [x| \b]_{\L}  ^{k}  := \displaystyle{\prod_{j=1}^{k}}  (x  +_{\L}   b_{j})  
               = (x +_{\L}   b_{1})(x +_{\L}   b_{2}) \cdots (x +_{\L}   b_{k}). 
\end{equation*}   
We set $[x |\b]^{0}_{\L}   := 1$.  
 For a sequence of positive integers
 $\lambda = (\lambda_{1}, \ldots, \lambda_{r})$,
we set 
\begin{equation*} 
    [\x| \b ]_{\L}^{\lambda}  
:=  \displaystyle{\prod_{i = 1}^{r} }  [x_i|\b]_{\L}^{\lambda_{i}} 
 = \prod_{i=1}^{r}  \prod_{j=1}^{\lambda_{i}} (x_{i} +_{\L} b_{j}).  
\end{equation*} 
Similarly, we define  
\begin{equation*} 
   [[x| \b  ]]_{\L}^{k}  :=(x +_{\L}  x)[x| \b ]_{\L}^{k-1}  
   = (x +_{\L}   x) (x  +_{\L}   b_{1})(x +_{\L}   b_{2}) \cdots (x +_{\L}   b_{k-1}).    
\end{equation*}   
For a sequence of positive integers   $\lambda = (\lambda_{1},  \ldots, \lambda_{r})$,   
we set  
\begin{equation*}
[[\x| \b ]]_{\L}^{\lambda}  
    :=  \displaystyle{\prod_{i = 1}^{r} } [[x_{i}|\b]]_{\L}^{\lambda_{i}} 
    =  \prod_{i=1}^{r}  (x_{i} +_{\L} x_{i}) [x_{i}|\b]_{\L}^{\lambda_{i} -1}.
\end{equation*}  
Moreover, for  indeterminates $x$ and $t$,  we define   
\begin{equation*} 
    [[x; t|\b]]_{\L}^{k}  :=  (x +_{\L}  [t](\overline{x}))  [x|\b]_{\L}^{k-1}
\end{equation*} 
for a positive integer $k \geq 1$.  
 For a sequence of positive integers   $\lambda = (\lambda_{1}, \ldots, \lambda_{r})$, 
we define  
\begin{equation*} 
    [[\x ; t|\b]]_{\L}^{\lambda}   
    :=  \prod_{i=1}^{r}  [[x_{i}; t|\b]]_{\L}^{\lambda_{i}} 
   = \prod_{i=1}^{r}  (x_{i} +_{\L}  [t](\overline{x}_{i}) [x_{i}|\b]_{\L}^{\lambda_{i} -1}.   
\end{equation*}

\subsection{
        Gysin  formula for  a projective bundle  in complex cobordism
}   \label{subsec:GysinFormulaProjectiveBundle(ComplexCobordism)}  
Recall from Quillen \cite[Theorem 1]{Quillen1969}  the Gysin formula  for a 
projective bundle in complex cobordism.  We shall state his result in a manner 
suitable for our purpose (for more details, see 
Nakagawa--Naruse \cite[\S 3.1]{Nakagawa-Naruse2019(arXiv)}):  
Let $E \longrightarrow X$ be a complex vector bundle of rank $n$.  
For any integer $m \in \Z$, denote by $\mathscr{S}^{\L}_{m}(E) = \mathscr{S}^{MU}_{m} (E)$
the  Segre class of $E$ in complex cobordism,    and 
\begin{equation*} 
    \mathscr{S}^{\L} (E; u)  :=  \sum_{m \in \Z} \mathscr{S}^{\L}_{m}(E)  u^{m} 
\end{equation*} 
its Segre series.  The explicit expression of  $\mathscr{S}^{\L}(E; u)$ is 
given by 
\begin{equation}   \label{eqn:SegreSeries(ComplexCobordism)}  
     \mathscr{S}^{\L}(E; u)  
        = \left.    \dfrac{1}{\mathscr{P}^{\L} (z)}   
                             \prod_{j=1}^{n}  \dfrac{z}{ z +_{\L}  \overline{x}_{j}}   
                                       \right |_{z = u^{-1}} 
                 =   \left.    \dfrac{1}{\mathscr{P}^{\L} (z)}   
                                   \dfrac{z^{n}}{\prod_{j=1}^{n} (z +_{\L}  \overline{x}_{j}) }   
                       \right |_{z = u^{-1}},    
\end{equation}
where  $\mathscr{P}^{\L}(z) :=  1 + \sum_{i=1}^{\infty} a^{\L}_{i, 1} z^{i}$, 
and $x_{1}, \ldots, x_{n}$ are the Chern roots of $E$ in complex cobordism.

Now consider the Grassmann bundle  $\pi^{1}:  G^{1}(E) \longrightarrow X$ of  {\it hyperplanes} in $E$.
Denote by  $Q^{1}$ the tautological {\it quotient}  bundle on $G^{1}(E)$.  
Put $x_{1} := c_{1}^{MU}(Q^{1})  \in MU^{2}(G^{1}(E))$.  
For a monomial $m$ of a formal Laurent series $F$, 
  we denote by  $[m](F)$ the coefficient of $m$ in $F$. 
Note that the Grassmann bundle $G^{1}(E)$ of hyperplanes in $E$ is canonically 
isomorphic to the projective bundle $P(E^{\vee}) = G_{1}(E^{\vee})$ of lines in 
the dual bundle $E^{\vee}$. Then, by dualizing the formula \cite[(3.4)]{Nakagawa-Naruse2019(arXiv)}, 
we have the following form of Quillen's Gysin formula:  
\begin{prop}     \label{prop:FundamentalFormula(TypeA)(ComplexCobordism)} 
For a polynomial $f(u) \in MU^{*}(X)[u]$, the Gysin map 
 $\pi^{1}_{*}:  MU^{*}(G^{1}(E))    \\ \longrightarrow MU^{*}(X)$
is described by the following formula$:$  
\begin{equation}    \label{eqn:FundamentalFormula(TypeA)(ComplexCobordism)}
    \pi^{1}_{*} (f(x_{1})) 
                               =   [u^{n-1}]  (f(u)  \cdot  \mathscr{S}^{\L} (E;  1/u)).    
\end{equation}  
\end{prop}   
\noindent
This is the fundamental formula for establishing more general Gysin 
formulas for  general  flag bundles.

Here we shall fix some notation concerning flag bundles:   
Let $E \longrightarrow X$ be a complex vector bundle of rank $n$. 
For  a positive integer $r = 1, 2, \ldots, n$, denote by 
$\pi^{r, r-1, \ldots, 1}:  \F \ell^{r, r-1, \ldots, 1}(E)  
=  \F \ell_{n-r, n-r + 1, \ldots, n-1} (E)\longrightarrow X$ 
be the associated flag bundle. 
Thus a point in $\F \ell^{r, r-1, \ldots, 1}(E)$ is written as a pair $(x, (W_{\bullet})_{x})$, 
where  $(W_{\bullet})_{x}$ is a flag, i.e., nested subspaces 
of the form $(W_{1})_{x}    \subset  (W_{2})_{x}    \subset \cdots \subset  (W_{r})_{x}$, 
  $\mathrm{codim} \, (W_{i})_{x}  = r + 1 -i$, 
in the fiber $E_{x}$ of $E$ over  each  point $x \in X$.    
We shall call the flag 
bundle of the form $\pi^{r, r-1, \ldots, 1}: \F \ell^{r, r-1, \ldots, 1}(E) \longrightarrow X$ the  {\it full} flag bundle.
  When $r = n$, we call  
$\pi^{n, n-1, \ldots, 1}: \F \ell^{n, n-1, \ldots, 1} (E) \longrightarrow X$ the {\it complete}  flag bundle,
and just write $\pi:  \F \ell  (E) \longrightarrow X$.  
On $\F \ell  (E)$, there is the universal flag of subbundles 
\begin{equation*} 
    0 = U_{0}  \subset U_{1} \subset  \cdots  \subset U_{i} \subset \cdots 
 \subset U_{n-1}   \subset  U_{n}  = \pi^{*}(E), 
\end{equation*} 
where $\rank U_{i} = i \; (i = 0, 1, \ldots, n)$.  
and we put 
\begin{equation}   \label{eqn:ChernRootsE(ComplexCobordism)}
    x_{i}  :=   c^{MU}_{1}(U_{n + 1 -i}/U_{n-i}) \in MU^{2}(\F \ell  (E)) \quad (i = 1, 2, \ldots, n), 
\end{equation} 
which  are the $MU^{*}$-theory Chern roots of $E$.   
It is well-known 
that the full flag bundle $\F \ell^{r, r-1, \ldots, 1}(E)$ is constructed as a sequence of 
of  Grassmann bundles of  codimension one hyperplanes$:$ 
\begin{equation}   \label{eqn:ConstructionFlagBundle}  
 \pi^{r, r-1, \ldots, 1}:    \F \ell^{r, r-1, \ldots, 2, 1} (E)  = G^{1}(U_{n - r + 1})  
   \overset{\pi^{r}}{\longrightarrow}   \cdots  \longrightarrow G^{1}(U_{n-1})  
   \overset{\pi^{2}}{\longrightarrow}  G^{1}(E)    
  \overset{\pi^{1}}{\longrightarrow}  X. 
\end{equation}

\section{Universal factorial Hall--Littlewood $P$- and $Q$-functions}   \label{sec:UFH-LPQ}  
In this section, we shall introduce our main object to study, 
the {\it universal factorial Hall--Littlewood $P$- and 
$Q$-functions},  which are universal  as well as factorial analogues of the ordinary Hall--Littlewood polynomials.

\subsection{Universal factorial Hall--Littlewood $P$- and $Q$-functions}   \label{subsec:UFH-LPQ}  
\subsubsection{Definition of the universal factorial Hall--Littlewood $P$- and $Q$-functions} 

We shall use  the notation  introduced in \S 
\ref{subsec:UFGL}.  
We provide the variables $\bm{x} = (x_{1}, x_{2}, \ldots)$ and $\bm{b} = (b_{1}, b_{2}, \ldots) $
with $\deg \, (x_{i}) = \deg \, (b_{i}) = 1$ for $i = 1, 2, \ldots$.  
Then we make the following definition:  
\begin{defn} [Universal factorial  Hall--Littlewood $P$- and $Q$-functions] 
   \label{def:DefinitionUFH-LPQ}   
For a sequence of positive integers 
 $\lambda = (\lambda_{1}, \ldots, \lambda_{r})$ with $r \leq n$,   
we define 
\begin{equation*}   \label{eqn:DefinitionUFH-LPQ}   
\begin{array}{lll} 
   HP^{\L}_{\lambda} (\x_{n}; t|\b)  
      :=     \displaystyle{\sum_{\overline{w}  \in S_{n}/(S_{1})^{r} \times S_{n-r}}}   
          w \cdot   \left [  
                                  [\x|\b]_{\L}^{\lambda}   \prod_{i=1}^{r}   
                                                     \prod_{j = i + 1}^{n}  
                                                      \dfrac{x_{i} +_{\L}  [t]  (\overline{x}_{j})} 
                                                                 {x_{i} +_{\L}  \overline{x}_{j} } 
                        \right ],   
\medskip    \\   
 HQ^{\L}_{\lambda} (\x_{n}; t|\b)  
      :=     \displaystyle{\sum_{\overline{w}  \in S_{n}/(S_{1})^{r} \times S_{n-r}}}   
          w \cdot   \left [  
                                  [[\x; t|\b]]_{\L}^{\lambda}   \prod_{i=1}^{r} 
                                                     \prod_{j = i + 1}^{n}  
                                                      \dfrac{x_{i} +_{\L}  [t] (\overline{x}_{j})} 
                                                                 {x_{i} +_{\L}  \overline{x}_{j} } 
                        \right ].    
\end{array} 
\end{equation*} 
We also define
\begin{equation*} 
    HP^{\L}_{\lambda}(\x_{n}; t) := HP^{\L}_{\lambda}(\x_{n}; t|\bm{0}) \quad 
   \text{and}  \quad 
    HQ^{\L}_{\lambda}(\x_{n}; t) := HQ^{\L}_{\lambda}(\x_{n}; t|\bm{0}). 
\end{equation*} 
In what follows,  
$HP^{\L}_{\lambda}(\x_{n}; t)$ and $HQ^{\L}_{\lambda}(\x_{n}; t)$ will be called 
the {\it universal Hall--Littlewood $P$- and $Q$-functions} respectively. 
\end{defn}  
\noindent
It follows immediately  from   Definition \ref{def:DefinitionUFH-LPQ}  that when $t = -1$, then
$HP^{\L}_{\lambda}(\x_{n}; -1|\b)$ (resp. $HQ^{\L}_{\lambda}(\x_{n}; -1|\b)$)
coincides with the universal factorial Schur $P$-function
$P^{\L}_{\lambda}(\x_{n}|\b)$ (resp.  $Q$-function $Q^{\L}_{\lambda}(\x_{n}|\b)$), 
for a strict partition $\lambda$,  which have been introduced 
in our previous paper \cite[Definition 4.1]{Nakagawa-Naruse2016}.    
In contrast to this,  
 when $t = 0$, both  $HP^{\L}_{\lambda}(\bm{x}_{n}; 0|\bm{b})$ and
 $HQ^{\L}_{\lambda} (\bm{x}_{n}; 0 |\bm{b})$ are different from 
the universal factorial Schur functions 
$s^{\L}_{\lambda}(\bm{x}_{n}|\bm{b})$ (\cite[Definition 4.10]{Nakagawa-Naruse2016}), 
 $\mathbb{S}^{\L}_{\lambda}(\bm{x}_{n}|\bm{b})$ (\cite[Definition 5.1]{Nakagawa-Naruse2018}).

\subsubsection{Factorial Hall--Littlewood $P$- and $Q$-polynomials} 
 The specialization from $F_{\L}(u, v) = u +_{\L} v$ to $F_{a}(u, v) = u + v$ is
of particular importance.  Under this specialization, the generalized powers $[x|\b]_{\L}^{k}$, 
$[[x; t|\b]]_{\L}^{k}$ reduce to $[x|\b]^{k}
 = \prod_{j=1}^{k} (x + b_{j})$, 
$[[x; t|\b]]^{k} = (x - tx) [x|\b]^{k-1}$
respectively, and we obtain 
{\it new}  symmetric polynomials
denoted by $HP_{\lambda}(\x_{n}; t |\b)$ and $HQ_{\lambda}(\x_{n}; t|\b)$ respectively.   
More explicitly, these  are defined as follows:  
\begin{defn} [Factorial Hall--Littlewood $P$- and $Q$-polynomials]  \label{defn:DefinitionFH-LPQ}  
For a sequence of positive integers $\lambda = (\lambda_{1}, \ldots, \lambda_{r})$ 
with $r \leq n$, 
we define  
\begin{equation*}   \label{eqn:DefinitionFH-LPQ}   
\begin{array}{lll} 
   HP_{\lambda} (\x_{n}; t|\b)  
      & := 
       \displaystyle{\sum_{\overline{w}  \in S_{n}/(S_{1})^{r} \times S_{n-r}}}   
          w \cdot   \left [  
                                  [\x|\b]^{\lambda} 
                                         \prod_{i=1}^{r}   
                                                     \prod_{j = i + 1}^{n}  
                                                      \dfrac{x_{i}  -tx_{j}} 
                                                                 {x_{i} - x_{j} } 
                        \right ],   
\medskip    \\   
      & =     \displaystyle{\sum_{\overline{w}  \in S_{n}/(S_{1})^{r} \times S_{n-r}}}   
          w \cdot   \left [  
                                  \prod_{i=1}^{r} \prod_{j=1}^{\lambda_{i}} (x_{i} + b_{j}) 
                                            \times   \prod_{i=1}^{r}   
                                                     \prod_{j = i + 1}^{n}  
                                                      \dfrac{x_{i}  -tx_{j}} 
                                                                 {x_{i} - x_{j} } 
                        \right ],   
\medskip    \\   
 HQ_{\lambda} (\x_{n}; t|\b)  
     & :=  \displaystyle{\sum_{\overline{w}  \in S_{n}/(S_{1})^{r} \times S_{n-r}}}   
          w \cdot   \left [  
                                  [[\bm{x}; t|\b]]^{\lambda} 
                                         \prod_{i=1}^{r} 
                                                     \prod_{j = i + 1}^{n}  
                                                      \dfrac{x_{i} -t x_{j}} 
                                                                 {x_{i}  - x_{j} } 
                        \right ]  \medskip \\
     &=   (1-t)^{r}   \displaystyle{\sum_{\overline{w}  \in S_{n}/(S_{1})^{r} \times S_{n-r}}}   
          w \cdot   \left [  
                                  \prod_{i=1}^{r} \prod_{j=1}^{\lambda_{i}-1} x_{i}(x_{i} + b_{j}) 
                                             \times  \prod_{i=1}^{r} 
                                                     \prod_{j = i + 1}^{n}  
                                                      \dfrac{x_{i} -t x_{j}} 
                                                                 {x_{i}  - x_{j} } 
                        \right ].    
\end{array} 
\end{equation*} 
We also define 
\begin{equation*} 
    HP_{\lambda}(\x_{n}; t) := HP_{\lambda}(\x_{n}; t|\bm{0}) \quad 
   \text{and}  \quad 
    HQ_{\lambda}(\x_{n}; t) := HQ_{\lambda}(\x_{n}; t|\bm{0}),  
\end{equation*} 
and will be called the Hall--Littlewood $P$- and $Q$-polynomials respectively.
\end{defn}  
\noindent
Note that, by definition, we have $HQ_{\lambda}(\x_{n}; t|\b) 
= (1-t)^{\ell (\lambda)} HP_{\lambda}(\x_{n}; t|0, \bm{b})$.    
For a strict partition $\lambda$, 
if $t$ specializes to be $-1$, then $HP_{\lambda}(\bm{x}_{n}; -1 |\bm{b})$ 
and $HQ_{\lambda}(\bm{x}_{n}; -1 |\bm{b})
 = 2^{\ell (\lambda)}  HP_{\lambda}(\bm{x}_{n};  -1|0, \bm{b})$ coincide with 
the factorial Schur $P$- and $Q$-polynomials 
(by 
replacing $\bm{b}$  with  $-\bm{b} = (-b_{1}, -b_{2}, \dots)$) (for their definition, 
see Ikeda--Mihalcea--Naruse \cite[\S 4.2]{IMN2011}).  
However, for a partition $\lambda$, both $HP_{\lambda}(\bm{x}_{n}; 0|\bm{b})$ 
and $HQ_{\lambda}(\bm{x}_{n}; 0|\bm{b})$ do not coincide with 
the factorial Schur polynomial (for its definition, see Molev--Sagan \cite[\S 2, (3)]{Molev-Sagan1999}).

\begin{ex}     \label{ex:FH-LPP} 
Direct computation from Definition $\ref{defn:DefinitionFH-LPQ}$
gives  some examples$:$ 
\begin{equation*} 
  \begin{array}{lll} 
       HP_{(1)}(\bm{x}_{n}; t|\bm{b})  =  x_{1} + x_{2} + \cdots + x_{n}  +  \dfrac{1-t^{n}}{1-t} b_{1}, \medskip \\
       HP_{(1^2)}(\bm{x}_{n}; t|\bm{b}) 
   =  (1 + t) \left [ m_{(1^2)}(\bm{x}_{n})  +  \dfrac{1-t^{n-1}}{1-t} b_{1} m_{(1)}(\bm{x}_{n}) 
            +  \dfrac{(1-t^{n-1})(1-t^{n})}{(1-t)(1-t^2)}  b_{1}^{2}  \right ], \medskip \\
    HP_{(2)}(\bm{x}_{n}; t|\bm{b})  = (s_{(2)}(\bm{x}_{n}) - t s_{(1^2)} (\bm{x}_{n}) 
                                            + (b_{1} + b_{2})s_{(1)}(\bm{x}_{n}) + b_{1}b_{2} \dfrac{1-t^m}{1-t}.  \medskip 
\end{array}  
\end{equation*}
Here $m_{\lambda}(\bm{x}_{n})$ and $s_{\lambda}(\bm{x}_{n})$ are respectively 
 the monomial 
symmetric polynomials and Schur polynomials corresponding to 
$\lambda$.  
\end{ex}

If $\lambda$ is a partition of length $\ell (\lambda) = r \leq n$, 
i.e.,  $\lambda_{1} \geq \lambda_{2} \geq \cdots \geq \lambda_{r} > 0$, 
our factorial Hall--Littlewood $P$- and $Q$-polynomials are 
related to  Macdonald's  Hall--Littlewood $P$- and $Q$-polynomials 
in the following way: 
We rewrite $\lambda$   as
   $\lambda = (n_{1}^{p_{1}} \; n_{2}^{p_{2}} \; \cdots n_{d-1}^{p_{d-1}} \; n_{d}^{p_{d}})$, 
where $n_{1} > n_{2} > \cdots > n_{d-1} > n_{d} = 0$, each $p_{i} > 0$, $p_{d}= n-r$, and  
$\sum_{i=1}^{d} p_{i} = n$.  We put $\nu (k) := \sum_{i=1}^{k} p_{i}$ for $k = 1, \ldots, d$
and $\nu (0) := 0$.  Denote by $S_{p_{k}}$ the symmetric group on 
$m_{k}$ letters $\nu (k-1) + 1, \ldots , \nu (k)$ for $k = 1, \ldots, d$. 
Thus the stabilizer subgroup $S_{n}^{\lambda}$  of $\lambda$ under the action of 
$S_{n}$ on $\lambda$ is given by $S_{n}^{\lambda} =  \prod_{k=1}^{d}  S_{p_{k}}$.  
For an integer $k \geq 0$, let $v_{k}(t) :=  \prod_{i=1}^{k}  \frac{1-t^{i}}{1-t}$, 
and for the above partition $\lambda$,  
we define\footnote{
Do not confuse $v_{\lambda > 0}(t)$ with $v_{\lambda}(t) := \prod_{i \geq 0}v_{m_{i}}(t)$ 
in Macdonald \cite[Chapter III, \S 1]{Macdonald1995},  where $m_{i} = m_{i}(\lambda)$ 
means the {\it multiplicity} for each $i \geq 0$.   
}
\begin{equation*} 
   v_{\lambda > 0} (t) :=  \displaystyle{\prod_{k=1}^{d-1}
                                                  }  v_{p_{k}} (t). 
\end{equation*}  
Using the identity 
\begin{equation}  \label{eqn:IdentityMacdonald}   
         \sum_{w \in S_{n}}  w \cdot  \left [   \prod_{1 \leq i  < j \leq n} 
                 \dfrac{x_{i} - tx_{j}} {x_{i} - x_{j}}  
                                            \right ] 
      = v_{n}(t)
\end{equation} 
in \cite[Chapter III, (1.4)]{Macdonald1995}, 
one  can prove  the following fact  along the same line as  the case of 
the usual Hall--Littlewood 
polynomials (\cite[Chapter III, (1.5)]{Macdonald1995}):  
\begin{equation}   \label{eqn:DivisibleByv_lambda(t)} 
    HP_{\lambda}(\bm{x}_{n}; t|\b)
    =    v_{\lambda > 0} (t)    \times 
             \displaystyle{\sum_{\overline{w}  \in S_{n}/S_{n}^{\lambda}
                                      } 
                             }  
           w  \cdot \left  [ 
                                 [\bm{x}|\bm{b}]^{\lambda} 
                                   \cdot 
                                       \prod_{\substack{ 
                                                                 1 \leq i < j \leq n \\
                                                                    \lambda_{i} > \lambda_{j } 
                                                            } 
                                                } 
                                             \dfrac{x_{i} - tx_{j}} {x_{i} - x_{j}}  
                         \right ].  
\end{equation} 
Thus $HP_{\lambda}(\bm{x}_{n}; t |\bm{b})$ is  divisible by $v_{\lambda > 0} (t)$. 
Taking this fact into account, we define 
\begin{equation}   \label{eqn:DefinitionP_lambda(x_n;t|b)}  
  P_{\lambda}(\x_{n}; t | \b) :=  \dfrac{1}{v_{\lambda > 0} (t)}  HP_{\lambda}(\x_{n}; t |\b), 
\end{equation} 
or equivalently, 
\begin{equation}   \label{eqn:DefinitionP_lambda(x_n;t|b)2} 
      P_{\lambda}(\x_{n}; t | \b) :=  \displaystyle{\sum_{\overline{w}  \in S_{n}/S_{n}^{\lambda}
                                      } 
                             }  
           w  \cdot \left  [ 
                                 [\bm{x}|\bm{b}]^{\lambda} 
                                   \cdot 
                                       \prod_{\substack{ 
                                                                 1 \leq i < j \leq n \\
                                                                    \lambda_{i} > \lambda_{j } 
                                                            } 
                                                } 
                                             \dfrac{x_{i} - tx_{j}} {x_{i} - x_{j}}  
                         \right ].  
\end{equation} 
It is this polynomial that can be considered as  a factorial version of 
Macdonald's Hall--Littlewood $P$-polynomial $P_{\lambda}(\x_{n}; t)$.   
Putting $\bm{b} = \bm{0}$  in  (\ref{eqn:DefinitionP_lambda(x_n;t|b)}), we have 
$HP_{\lambda}(\x_{n} ; t)  =  v_{\lambda > 0} (t) P_{\lambda}(\x_{n}; t)$.  
In particular,  for  $\lambda$ strict,  $HP_{\lambda}(\x_{n};t)$ coincides with 
$P_{\lambda}(\x_{n};t)$.  
On the other hand, 
by the argument in Macdonald' book \cite[pp.210--211]{Macdonald1995}, 
we see that $HQ_{\lambda}(\x_{n}; t) $  equals to the ordinary Hall--Littlewood $Q$-polynomial
$Q_{\lambda}(\x_{n}; t)$.  
\begin{rem} 
\begin{enumerate} 
\item The universal analogue of the left hand side of $(\ref{eqn:IdentityMacdonald})$, 
namely, 
\begin{equation*} 
    \sum_{w \in S_{n}}  w \cdot \left [ 
                                                       \prod_{1 \leq i < j \leq n}   
                                                        \dfrac{x_{i} +_{\L} [t](\overline{x}_{j})} 
                                                                  {x_{i} +_{\L} \overline{x}_{j}} 
                                               \right ] 
\end{equation*} 
is no longer a polynomial in $t$ alone $($it contains the  variables $x_{1}, \ldots, x_{n}$$)$. 
 Therefore an analogous formula of 
 $(\ref{eqn:DivisibleByv_lambda(t)})$ does not hold in this case.

\item For a general sequence of positive integers $\lambda$, 
$HP_{\lambda}(\x_{n}; t|\b)$ may not be divisible by $v_{\lambda > 0} (t)$.  
\end{enumerate} 
\end{rem} 

\subsection{Characterization of the universal factorial  Hall--Littlewood $P$- and $Q$-functions} 
Geometrically, the universal factorial Hall--Littlewood  $P$- and $Q$-functions  are  characterized 
by means of  the Gysin map for certain flag bundles 
(We learned this idea from the work \cite{Pragacz2015} by Pragacz).   
Let $E \longrightarrow X$ be a complex vector bundle of rank $n$, and 
$x_{1}, \ldots, x_{n}$  are the $MU^{*}$-theory Chern roots of $E$ 
as in (\ref{eqn:ChernRootsE(ComplexCobordism)}).   
Consider the associated  flag bundle $\pi^{r, r-1, \ldots, 1}:  \F \ell^{r, r-1, \ldots, 1}(E) 
\longrightarrow X$.   Then,   it follows immediately from the above Definition 
\ref{def:DefinitionUFH-LPQ} and a description of the Gysin homomorphism 
$(\pi^{r, \ldots, 1})_{*}$  as a certain symmetrizing operator
 (Brion \cite[Proposition 1.1]{Brion1996}, 
Nakagawa--Naruse  \cite[Theorem 4.10]{Nakagawa-Naruse2018})
that  the following  formula holds:
\begin{prop} [Characterization of the universal factorial Hall--Littlewood $P$- and $Q$-functions] 
\label{prop:CharacterizationUFH-LPQ}   
\begin{align}  
    (\pi^{r, \ldots, 1})_{*} 
 \left  ( 
                [\x|\b]_{\L}^{\lambda}   \displaystyle{\prod_{i=1}^{r}}  
                  \prod_{j=i+1}^{n}  (x_{i} +_{\L}  [t](\overline{x}_{j}))   
\right  )  &=   HP^{\L}_{\lambda}(\x_{n} ; t|\b),    
 \label{eqn:CharacterizationUFH-LP}   \\
   (\pi^{r, \ldots, 1})_{*}  \left ( 
                                    [[\x ;t|\b]]_{\L}^{\lambda}   \displaystyle{\prod_{i=1}^{r}} 
               \prod_{j=i+1}^{n}  (x_{i} +_{\L}  [t](\overline{x}_{j} )) 
                      \right  )  &=  HQ^{\L}_{\lambda}(\x_{n} ; t|\b).  
  \label{eqn:CharacterizationUFH-LQ}     
\end{align}  
Here $\b = (b_{1}, b_{2}, \ldots)$ is a sequence of elements in $MU^{*}(X)$.   
\end{prop} 
\noindent
 This characterization  seems merely a  paraphrase of Definition \ref{def:DefinitionUFH-LPQ} 
at first sight.  However, this geometric interpretation 
 will be crucial in our current work.       
In fact,  as shown in the subsequent section, \S \ref{sec:GFUFH-LPQ}, 
a careful application of the fundamental Gysin formula
 (\ref{eqn:FundamentalFormula(TypeA)(ComplexCobordism)}) to the left hand 
side of (\ref{eqn:CharacterizationUFH-LP}), (\ref{eqn:CharacterizationUFH-LQ}) 
enables us to obtain the generating functions for the universal factorial 
Hall--Littlewood $P$- and  $Q$-functions.

\begin{rem} 
As a special case of the above result,  the factorial Hall--Littlewood $P$-polynomial 
$HP_{\lambda}(\x_{n}; t|\b)$ is characterized by the cohomology Gysin map, 
i.e., we have 
\begin{equation*} 
        (\pi^{r, \ldots, 1})_{*} 
 \left  ( 
                [\x|\b]^{\lambda}   \displaystyle{\prod_{i=1}^{r}}  
                  \prod_{j=i+1}^{n}  (x_{i}  - tx_{j})   
\right  )  =   HP_{\lambda}(\x_{n} ; t|\b). 
\end{equation*}   
A  factorial version of Macdonald's Hall--Littlewood $P$-polynomial 
$P_{\lambda}(\x_{n}; t|\b)$ can be also characterized by the Gysin map$:$  
Consider the partial flag bundle  $\pi^{\lambda}:  
\F \ell^{\lambda} (E) := \F  \ell^{\nu (d-1), \nu (d-2), \ldots, \nu (1)}(E)
\longrightarrow X$.     Here we write $\lambda 
= (n_{1}^{p_{1}}  \cdots n_{d}^{p_{d}})$  and $\nu (k) = \sum_{i=1}^{k} p_{i}$ 
 as in \S $\ref{subsec:UFH-LPQ}$.  
Then the following formula holds$:$ 
\begin{equation*} 
     (\pi^{\lambda})_{*} 
    \left  ( 
                [\x|\b]^{\lambda}   \displaystyle{\prod_{i=1}^{r}}  
                  \prod_{j=i+1}^{n}  (x_{i}  - tx_{j})   
\right  )  =   P_{\lambda}(\x_{n} ; t|\b). 
\end{equation*} 
\end{rem} 

\subsection{Vanishing properties of  factorial Hall--Littlewood $P$- and $Q$-polynomials
}
It is known that the factorial Schur  $S$-, $P$-, and $Q$-polynomials
have the remarkable property  
called   {\it vanishing property} (see Molev--Sagan \cite[Theorem 2.1]{Molev-Sagan1999}, 
Ivanov \cite[Theorem 5.3]{Ivanov2004}).  
In  this subsection, we shall show that 
our factorial Hall--Littlewood $P$- and $Q$-polynomials
have this property.    
Let $\b =(b_{1}, b_{2},\ldots)$ be a sequence of indeterminates, and
$t$ be an indeterminate.
For a partition $\mu =  (\mu_{1}, \mu_{2}, \ldots)$,  
let  $m_{i}  =m_{i}  (\mu)$ be the multiplicity of $i$ ($1 \leq i  \leq \mu_{1}$), i.e.,  
the number of components in $\mu$ whose size is equal to $i$.  
We  define
\begin{equation*} 
   -\b_\mu(t):=(-\b_{\mu_{1}}^{m_{\mu_{1}}}(t),   \ldots,   -\b_{2}^{m_{2}}(t),   -\b_{1}^{m_{1}}(t)), 
\end{equation*} 
where $-\b_{i}^{k}(t)  :=  (-b_{i},  -t b_{i}, \ldots,  -t^{k-1} b_{i})$ 
(we set $-\b_{i}^0(t)=(\; )$, the empty sequence).  
Let us consider to substitute the variables 
$\x_{n} = (x_{1}, \ldots, x_{n})$ with the sequence 
$-\b_{\mu}(t)$ for a partition $\mu$ of length $\ell (\mu) \leq n$. 
We sometimes write $\x_{n} \rightarrow -\b_{\mu}(t)$, or more specifically, 
say, $x_{1}  \rightarrow  -b_{\mu_{1}}$ when we make such   substitution.  
After the substitution $\x_{n} \rightarrow -\b_{\mu}(t)$ was made,  
denote by 
$\mathrm{ev}_{\mu} (x_{i})$ ($i=1,\ldots, n$)  the $i$-th entry of $-\b_{\mu} (t)$.  
Therefore  we have 
\begin{equation*} 
     (\mathrm{ev}_{\mu}(x_1),  \ldots,  \mathrm{ev}_{\mu}(x_n))  =  -\b_\mu(t). 
\end{equation*} 
We also use the notation $\mathrm{ev}_{\mu} ( f(x_{1},  \ldots, x_{n}))  = 
f(\mathrm{ev}_{\mu}(x_{1}), \ldots,  \mathrm{ev}_{\mu} (x_{n}))$  
in the following. 
For example,
if $\mu=(5,5,5,4,1,1)$,  then 
$m_1(\mu)=2$, $m_2(\mu)=0$, $m_3(\mu)=0$, $m_4(\mu)=1$, $m_5(\mu)=3$,   and
$-\b_{\mu}(t)=(-b_5,  -t b_5, -t^2 b_5,  -b_4,  -b_1,  -t b_1)$,  
$\mathrm{ev}_\mu (x_1)  = -b_5$, $\mathrm{ev}_\mu (x_2)= -t b_5$, 
$\mathrm{ev}_{\mu} (x_{2} - tx_{1}) = -tb_{5}- t \cdot (-b_{5}) = 0$, etc.
With these notations, we  can prove the following: 
\begin{prop}[Vanishing property]      \label{prop:VanishingProperty(Cohomology)}
 Let $\lambda$, $\mu$ be partitions of length at most $n$ and
set $\hat{\mu}:  =\mu + (1^n) = (\mu_1+1,  \mu_2+1, \ldots,  \mu_n+1)$.
Then  the factorial Hall--Littlewood $P$- and $Q$-polynomials satisfy 
the following vanishing property$:$ 
\begin{enumerate} 
\item 
If $  \mu \not\supset \lambda$,
we have 
\begin{equation*} 
   HQ^{(n)}_{\lambda}(-\b_{\mu}(t),   \underbrace{0,\ldots,0}_{n - \ell (\mu)} ;t|\b)=0 
\quad \text{and}  \quad 
HP^{(n)}_{\lambda}(-\b_{\hat{\mu}}(t);t|\b)=0.
\end{equation*}

\item  
If $\mu=\lambda$,   we have 
\begin{equation*} 
\begin{array}{rl} 
HQ^{(n)}_{\lambda}(-\b_{\lambda}(t),  \underbrace{0,\ldots,0}_{n - \ell (\lambda)} ;t|\b)
&   =\displaystyle
\prod_{q=1}^{\lambda_1}
\prod_{k=1}^{m_q(\lambda)}\left(
\prod_{p=1}^{q}
(-t^{k-1} b_q  +   t^{m_p(\lambda)} b_p)
\right),   \text{and}   \medskip   \\
HP^{(n)}_{\lambda}(-\b_{\hat{\lambda}}(t);t|\b)
&  =
\displaystyle v_{{\lambda>0}}(t)
\prod_{q=2}^{\hat{\lambda}_1}
\prod_{k=1}^{m_q(\hat{\lambda})}
\left (
\prod_{p=1}^{q-1}(-t^{k-1} b_{q} +   t^{m_{p}(\hat{\lambda})} b_{p})
\right ).   \medskip 
\end{array} 
\end{equation*} 
\end{enumerate} 
\end{prop}

\begin{proof}
We only  prove the case of $HP^{(n)}_\lambda (\x_{n}; t|\b)$.  The case 
of $HQ^{(n)}_{\lambda} (\x_{n}; t|\b)$  can be proved similarly.  

(1) As $\lambda\not \subset \mu$, we can find minimal $k$ such that 
$\lambda_k>\mu_k$  ($1\leq k\leq  \ell(\lambda) = r$).
For each choice  $w$ of  $\overline{w}   \in S_n/(S_{1})^{r} \times S_{n-r}$,
we will show the corresponding summand in (\ref{defn:DefinitionFH-LPQ}) vanishes, i.e., 
\begin{equation*} 
\left ( 
w   \cdot \left [  
                        [x_1|\b]^{\lambda_1} \cdots
                        [x_r|\b]^{\lambda_r}
              \prod_{1 \leq i  \leq r, \; i < j \leq n}
               \frac{x_i-t x_j}{x_i-x_j}
             \right ] 
\right )_{\x_{n}  \rightarrow -\b_{\hat{\mu}}(t)}     
=0.
\end{equation*} 

\noindent
For  the permutation $w$,  take minimal $d$ ($1 \leq d\leq k$) such that $w(d)\geq k$. 
Then we divide the discussion into two cases:  \\
\noindent 
Case 1. $w(d)=1$ or  [($w(d)>1$ and $\mu_{w(d)-1}>\mu_{w(d)}$].
In this case, 
\begin{equation*} 
    \big  (  [x_{w(d)}|\b]^{\lambda_d}  \big  )_{x_{w(d)} \to    \mathrm{ev}_{\hat{\mu}}(x_{w(d)})}=0
\end{equation*} 
because $\mathrm{ev}_{\hat{\mu}}(x_{w(d)})=  -b_{\mu_{w(d)}+1}$ and 
$\lambda_d\geq \lambda_k>\mu_k\geq \mu_{w(d)}$. \\
\noindent
Case 2. $w(d)>1$ and $\mu_{w(d)-1}=\mu_{w(d)}$.
In this case,  we claim that 
\begin{equation*} 
      \mathrm{ev}_{\hat{\mu}}   \left   (
                                                     \displaystyle\prod_{1  \leq i  \leq r,\; i< j  \leq n}
                                             \frac{x_{w(i)}-t x_{w(j)}}{x_{w(i)}-x_{w(j)}}
                                        \right  )=   0.
\end{equation*} 
First note that, by the minimality of the choice of $k$,  we have $w(d)>k$.
Let  $p$  ($1 \leq p \leq n$) be an integer such that $w(p)=w(d)-1$.  
Then,  
by the minimality of $d$, we have   $p > k$, and hence $d < p \leq n$.  
Since $\mu_{w(p)} =  \mu_{w(d)}$ and $w(d) =  w(p)+1$,  
we have $\mathrm{ev}_{\hat{\mu}} ( x_{w(d)} ) = t   \cdot  \mathrm{ev}_{\hat{\mu}}  (x_{w(p)} )$.
As $1\leq d  \leq r$, and $d < p \leq n$,   the  factor $\mathrm{ev}_{\hat{\mu}} ( x_{w(d)}-t x_{w(p)} )$
vanishes, and therefore our claim follows.

(2)
When $\mu=\lambda$, we first show that each summand corresponding to  
$\ol{w} \in S_{n}/(S_1)^r \times S_{n-r})$
vanishes  under the evaluation $\mathrm{ev}_{\hat{\lambda}}$,  except for 
$\ol{w} = \ol{e}$ ($e$ the identity element).
In fact, if  $\ol{w}  \neq  \ol{e}$,   we can find minimal $d$ such that $1\leq d  \leq r$ and $w(d) > d$. 
Then, by dividing the argument into two cases Case 1. $\lambda_{w(d) - 1} > \lambda_{w(d)}$, 
 and Case 2. $\lambda_{w(d)-1} = \lambda_{w(d)}$, 
 we can show that the corresponding summand vanishes under the evaluation
 $\mathrm{ev}_{\hat{\lambda}}$. 

For  $w=  e$, we can evaluate the term as follows.
 For each $i$ ($1 \leq i \leq r$),
 we can write $\mathrm{ev}_{\hat{\lambda}}(x_i)  =  t^{k-1} b_q$ 
($k\geq 1$, $q=\lambda_i+1\geq 2$).
 Then, the direct computation yields 
\begin{equation*}  
   \mathrm{ev}_{\hat{\lambda}} \left( [x_i|\b]^{\lambda_i}
 \prod_{j=i+1}^{n} \frac{x_i-t x_j}{x_i-x_j}
 \right)=\frac{1-t^{m_q(\hat{\lambda})-k+1}}{1-t}
 \prod_{p=1}^{q-1} ( 
                                 -t^{k-1} b_q +  t^{m_p (\hat{\lambda})} b_p  
                       ).
 \end{equation*} 
We then  take all the product of these evaluations for $1\leq i \leq r$.   
Since we have $\prod_{q=2}^{\hat{\lambda}_{1}} \prod_{k=1}^{m_{q}(\hat{\lambda})} 
     \frac{1 - t^{m_{q}(\hat{\lambda}) - k + 1}}{1 - t}  = v_{\lambda > 0}(t)$, 
we get the desired formula. 
\end{proof}

More generally, we can prove the vanishing property of the universal factorial 
Hall--Littlewood $P$- and $Q$-functions by the similar way. 
We only exhibit the result.  To state the result, we prepare some notations. 
For a partition $\mu$, we define 
\begin{equation*} 
\ol{\b}_{\mu} [t]
 :=  (\ol{\b}_{\mu_1}^{m_{\mu_1}} [t], 
      \ol{\b}_{\mu_1-1}^{m_{\mu_1-1}} [t], 
       \ldots, \ol{\b}_2^{m_2} [t],  
    \ol{\b}_1^{m_1} [t]),
\end{equation*} 
where   $\ol{\b}_i^k [t] := (\ol{b}_i, [t](\ol{b}_i) ,\ldots,  [t^{k-1} ](\ol{b}_i))$
(we set $\ol{\b}_i^0 [t]  =( \; )$ i.e.,  the empty sequence).

\begin{prop} [Vanishing property]    \label{prop:VanishingProperty(Cobordism)} 
 Let $\lambda$, $\mu$ be partitions of length at most $n$ and
set $\hat{\mu}   =\mu + (1^n) = (\mu_1+1,  \mu_2+1, \ldots,  \mu_n+1)$.
Then  the universal factorial Hall--Littlewood $P$- and $Q$-functions satisfy 
the following vanishing property$:$ 
\begin{enumerate} 
\item
If $ \mu \not \supset \lambda$, we have 
\begin{equation*} 
  HQ^{\L, (n)}_{\lambda}(\ol{\b}_{\mu} [t],  \underbrace{0,\ldots,0}_{n - \ell (\mu)};t|\b)=0 
     \quad    \text{ and }  \quad    
   HP^{\L, (n)}_{\lambda}(\ol{\b}_{\hat{\mu}} [t];t|\b)=0.
\end{equation*}

\item
If $\mu=\lambda$,  we have 
\begin{equation*} 
\begin{array}{rl} 
HQ^{\L, (n)}_{\lambda}(\ol{\b}_{\lambda} [t],  \underbrace{0,\ldots,0}_{n - \ell (\lambda)};t|\b)
&   =\displaystyle
\prod_{q=1}^{\lambda_1}
\prod_{k=1}^{m_q(\lambda)}\left(
\prod_{p=1}^{q}
([t^{k-1}](\ol{b}_q)  +_\L [t^{m_p(\lambda)}] (b_p))
\right),
 \text{and}   \medskip \\
HP^{\L, (n)}_{\lambda}(\ol{\b}_{\hat{\lambda}} [t] ;t|\b)
&  =
\displaystyle v_{{\lambda>0}}(t)
\prod_{q=2}^{\hat{\lambda}_1}
\prod_{k=1}^{m_q(\hat{\lambda})}
\left (
\prod_{p=1}^{q-1}([t^{k-1}] (\ol{b}_{q}) +_\L [t^{m_{p}(\hat{\lambda})}] (b_{p}))
\right ).
\end{array} 
\end{equation*} 
\end{enumerate} 
\end{prop}

\subsection{Pieri-type  formula and Hook formula}
The vanishing property established in the previous section is so useful that 
one  can derive  several interesting results of factorial Hall--Littlewood polynomials 
from this. 
Denote by  $\Lambda (\x_{n}) = \Z[x_{1},  \ldots, x_{n}]^{S_{n}}$  the ring of 
symmetric polynomials of $n$ variables, and $\mathcal{P}_{n}$  
the set of partitions of length $\leq n$.  
Then,  it is known that the usual Hall--Littlewood $P$-polynomials 
$P_{\lambda}(\x_{n}; t)$ ($\lambda \in \mathcal{P}_{n}$) form 
a $\Z[t]$-basis of $\Lambda (\x_{n})[t] \cong  \Z[t] \otimes_{\Z}   \Lambda (\x_{n})$ 
(cf. Macdonald \cite[III, (2.7)]{Macdonald1995}. 
Therefore there exist polynomials $c_{\lambda, \mu}^{\nu} (t) 
 = c_{\lambda, \mu}^{\nu, (n)}(t)  \in \Z[t]$ such 
that 
\begin{equation*} 
    P_{\lambda}(\x_{n}; t) P_{\mu}(\x_{n}; t) 
 =  \sum_{\nu} c_{\lambda, \mu}^{\nu} (t)  P_{\nu}(\x_{n}; t)
  \quad (\lambda, \mu,  \nu \in \mathcal{P}_{n}). 
\end{equation*}  
It is known that (see Macdonald \cite[III, (5.7)]{Macdonald1995}) 
the following Pieri-type formula holds: 
\begin{equation}   \label{eqn:Pieri-typeFormulaP_lambda(x_n;t)}
P_{(1)}(\bm{x}_{n}; t) P_{\mu}(\bm{x}_{n}; t)
   =  \sum_{\mu \subset \nu, \; |\nu/\mu| = 1} 
   \alpha_{\nu/\mu} (t)    P_{\nu}(\bm{x}_{n}; t), 
  \end{equation}  
where     the polynomial  $\alpha_{\nu/\mu} (t)  = \alpha_{\nu/\mu}^{(n)}(t)$ is given by
 $\dfrac{1 - t^{m_{j}(\nu)}}{1 - t}$
 if $\nu/\mu$ has a box in $j$th column.  
As for the factorial version of Macdonald's Hall--Littlewood $P$-polynomials 
$P_{\lambda}(\bm{x}_{n}; t|\bm{b})$ (see (\ref{eqn:DefinitionP_lambda(x_n;t|b)})), 
one can consider a similar problem: 
First we see that factorial Hall--Littlewood $P$-Polynomials 
$P_{\lambda}(\bm{x}_{n}; t|\bm{b})$ ($\lambda \in \mathcal{P}_{n}$) form 
a $\Z[t] \otimes_{\Z}  \Z[\bm{b}]$-basis of $\Lambda (\x_{n}|\bm{b}) [t] := 
\Z[t] \otimes_{\Z} \Z[\bm{b}] \otimes_{\Z} \Lambda (\bm{x}_{n})$, 
where $\Z[\bm{b}] = \Z[b_{1}, b_{2},  \ldots] $ is a polynomial ring 
of indeterminates $\bm{b}= (b_{1}, b_{2}, \ldots)$.  
Therefore there exists polynomials $c_{\lambda, \mu}^{\nu}  (t|\bm{b}) 
 =  c_{\lambda, \mu}^{\nu, (n)}(t|\bm{b})   \in
\Z[t] \otimes \Z[\bm{b}]$ such that 
\begin{equation}   \label{eqn:c_lambdamu^nu(t|b)}  
    P_{\lambda}(\x_{n}; t|\bm{b}) P_{\mu}(\x_{n}; t|\bm{b}) 
 =  \sum_{\nu} c_{\lambda, \mu}^{\nu} (t|\bm{b})  P_{\nu}(\x_{n}; t|\bm{b})
  \quad (\lambda, \mu,  \nu \in \mathcal{P}_{n}). 
\end{equation}  
By definition,  the ``structure constants'' $c_{\lambda, \mu}^{\nu}(t|\bm{b})$
 is a homogeneous polynomial of 
degree $|\lambda| + |\mu| - |\nu|$ in the indeterminates $\bm{b} = (b_{1}, b_{2}, \ldots)$ 
with coefficients in $\Z[t]$.  Comparing the highest homogeneous components 
in $\bm{x}_{n} = (x_{1}, \ldots, x_{n})$ on  both sides of (\ref{eqn:c_lambdamu^nu(t|b)}), we see that 
\begin{equation*} 
      c_{\lambda, \mu}^{\nu}(t|\bm{b}) 
    = \left \{ 
                  \begin{array}{lll} 
                        &   c_{\lambda, \mu}^{\nu} (t) &  \quad  \text{if}  \;  |\lambda| + |\mu| =  |\nu|, \\
                          &  0                                 &  \quad   \text{if} \;  |\lambda| + |\mu| <    |\nu|.  
                 \end{array} 
         \right. 
\end{equation*} 
From  the commutativity of the product in the left hand side of 
(\ref{eqn:c_lambdamu^nu(t|b)}),
 the symmetry $c_{\lambda, \mu}^{\nu}(t|\b)
=  c_{\mu, \lambda}^{\nu} (t|\b)$ holds obviously.  
Furthermore, using the vanishing property, Proposition \ref{prop:VanishingProperty(Cohomology)}, 
and the totally same argument as in Molev--Sagan \cite[p.4434]{Molev-Sagan1999}, 
we see that $c_{\lambda, \mu}^{\nu} (t|\bm{b})$ is zero unless $\lambda \subset \nu$ and 
$\mu \subset \nu$.

Now  we consider the case where $\lambda = (1)$ in (\ref{eqn:c_lambdamu^nu(t|b)}). 
  Then,   by the known properties of the structure constants, we only need to consider 
those $\nu$ with $\mu \subset \nu$ and $|\nu| \leq  |\mu| + 1$, 
Thus (\ref{eqn:c_lambdamu^nu(t|b)}) takes the following form: 
\begin{equation*} 
   P_{(1)}(\x_{n}; t|\bm{b}) P_{\mu}(\x_{n}; t|\bm{b}) 
   =   c_{(1), \mu}^{\mu}(t|\bm{b}) P_{\mu}(\bm{x}_{n};  t | \bm{b})  + 
          \sum_{\mu \subset \nu, \; |\nu/\mu| = 1} c_{(1), \mu}^{\nu} (t|\bm{b}) P_{\nu}(\bm{x}_{n}; t |\bm{b}). 
\end{equation*} 
Setting $\bm{x}_{n}  = - \bm{b}_{\hat{\mu}} (t)$ and using the vanishing property, 
we see that $c_{(1), \mu}^{\mu} (t|\bm{b}) =  P_{(1)}(-\bm{b}_{\hat{\mu}}(t); t |\bm{b})$. 
On the other hand, by the degree reason, we have $c_{(1), \mu}^{\nu} (t|\bm{b}) = 
c_{(1), \mu}^{\nu} (t) = \alpha_{\nu/\mu}(t)$ when $\mu \subset \nu$ and $|\nu/\mu|  = 1$. 
Therefore we obtain the following  formula: 
\begin{prop}[Pieri-type formula for factorial Hall--Littlewood $P$-polynomials]  
\label{prop:Pieri-typeFormulaFH-LP} 
\begin{equation*} 
   P_{(1)}(\x_{n}; t|\bm{b}) P_{\mu}(\x_{n}; t|\bm{b}) 
    =  P_{(1)}(-\bm{b}_{\hat{\mu}}(t); t |\bm{b})  P_{\mu}(\bm{x}_{n};  t | \bm{b})  + 
          \sum_{\mu \subset \nu, \; |\nu/\mu| = 1} \alpha_{\nu/\mu}  P_{\nu}(\bm{x}_{n}; t | \bm{b}). 
\end{equation*} 
\end{prop} 

Using Proposition \ref{prop:Pieri-typeFormulaFH-LP},  we  can derive 
a  generalization of   the  so-called {\it hook} ({\it length})  {\it formula}. 
We argue as follows (the following argument is essentially the same as 
that given in Molev--Sagan \cite[Proposition 3.2]{Molev-Sagan1999}
for factorial Schur polynomials, 
although they did not mention the relation to the hook formula.  
For this type of argument, see also Naruse--Okada \cite[Lemma 4.5]{Naruse-Okada2019}). 
For simplicity,  we shall 
use the abbreviated notation $P_{\lambda}$,  
$c_{\lambda, \mu}^{\nu}$, and $\alpha_{\lambda/\mu}$  for  
 $P_{\lambda}(\x_{n} ; t|\b)$, 
$c_{\lambda, \mu}^{\nu, (n)}(t|\bm{b})$, and $\alpha_{\lambda, \mu}^{(n)}(t)$
respectively  in the following.  Then our hook  formula is stated as follows: 
\begin{prop} [Hook formula for factorial Hall--Littlewood $P$-polynomials]    \label{prop:OurHookFormula}  
Let $\mu$  be a partition of length  $\ell (\mu) \leq n$ and size $|\mu| = k$,  a positive integer.   
Then we have the following formula$:$ 
\begin{equation}   \label{eqn:OurHookFormula} 
     \sum_{\mu = \mu^{(0)} \supsetneq \mu^{(1)}  \supsetneq \mu^{(2)} \supsetneq \cdots \supsetneq \mu^{(k)} = \emptyset} 
 \dfrac{\alpha_{\mu^{(k-1)}/\mu^{(k)}}
           } 
           {c_{(1), \mu}^{\mu} - c_{(1), \mu^{(k)}}^{\mu^{(k)}} 
            } 
   \cdot \cdots \cdot
\dfrac{\alpha_{\mu^{(1)}/\mu^{(2)}}
           } 
           {c_{(1), \mu}^{\mu} - c_{(1), \mu^{(2)}}^{\mu^{(2)}} 
            }
    \cdot 
  \dfrac{\alpha_{\mu^{(0)}/\mu^{(1)}}
           } 
           {c_{(1), \mu}^{\mu} - c_{(1), \mu^{(1)}}^{\mu^{(1)}} 
            } 
   =  \dfrac{1}{  P_{\mu}(-\bm{b}_{\hat{\mu}} (t); t|\bm{b})  }.    
\end{equation} 
\end{prop}

\begin{proof} 
We consider the associativity of  the product 
\begin{equation*} 
    (P_{(1)}  P_{\lambda}) P_{\mu} = P_{(1)} (P_{\lambda} P_{\mu}), 
\end{equation*} 
and take the coefficient of $P_{\mu}$ on both sides. 
Using the fact that 
$c_{\alpha, \beta}^{\gamma}$ is zero unless $\alpha \subset \gamma$ and $\beta \subset \gamma$, 
and   Proposition \ref{prop:Pieri-typeFormulaFH-LP},   we have
\begin{equation*} 
    c_{(1), \lambda}^{\lambda} c_{\lambda, \mu}^{\mu}  +  
     \sum_{
                     \mu \supset \nu \supsetneq \lambda, \;  |\nu/\lambda| = 1
       }   \alpha_{\nu/\lambda}  \,   c_{\nu, \mu}^{\mu}  
       =   c_{(1), \mu}^{\mu}  c_{\lambda, \mu}^{\mu}, 
\end{equation*}  
and therefore we have 
\begin{equation*} 
     (c_{(1), \mu}^{\mu} - c_{(1), \lambda}^{\lambda}) c_{\lambda, \mu}^{\mu} 
  =   \sum_{
                     \mu \supset \nu \supsetneq \lambda, \;  |\nu/\lambda| = 1
       }   \alpha_{\nu/\lambda}  \,  c_{\nu, \mu}^{\mu}. 
\end{equation*} 
By  definition and Example \ref{ex:FH-LPP},  we know that 
$P_{(1)}(\bm{x}_{n}; t|\bm{b})  =   x_{1}+ \cdots + x_{n}  +  \frac{1-t^{n}}{1-t}b_{1}$. 
Therefore, if $\mu \supsetneq \lambda$, we see that  
$c_{(1), \mu}^{\mu} - c_{(1), \lambda}^{\lambda}
 = P_{(1)}(-\bm{b}_{\hat{\mu}}(t); t|\bm{b}) - P_{(1)}(-\bm{b}_{\hat{\lambda}}(t); t|\bm{b}) 
\neq 0$.   Thus we have the following recurrence formula:  
\begin{equation*} 
    c_{\lambda,   \mu}^{\mu}  
     =   \sum_{
                     \mu \supset \nu \supsetneq \lambda, \;  |\nu/\lambda| = 1
       }    \dfrac{ \alpha_{\nu/\lambda} } 
                {  c_{(1), \mu}^{\mu} - c_{(1), \lambda}^{\lambda} }   c_{\nu, \mu}^{\mu}. 
\end{equation*} 
Using this recurrence formula repeatedly, we obtain 
\begin{equation*} 
c_{\emptyset, \mu}^{\mu}
   =   \sum_{\mu = \mu^{(0)} \supsetneq \mu^{(1)}  \supsetneq \mu^{(2)} \supsetneq \cdots \supsetneq \mu^{(k)} = \emptyset} 
 \dfrac{\alpha_{\mu^{(k-1)}/\mu^{(k)}}
           } 
           {c_{(1), \mu}^{\mu} - c_{(1), \mu^{(k)}}^{\mu^{(k)}} 
            } 
   \cdot \cdots \cdot
\dfrac{\alpha_{\mu^{(1)}/\mu^{(2)}}
           } 
           {c_{(1), \mu}^{\mu} - c_{(1), \mu^{(2)}}^{\mu^{(2)}} 
            }
    \cdot 
  \dfrac{\alpha_{\mu^{(0)}/\mu^{(1)}}
           } 
           {c_{(1), \mu}^{\mu} - c_{(1), \mu^{(1)}}^{\mu^{(1)}} 
             } c_{\mu, \mu}^{\mu}.    
\end{equation*} 
The fact that $c_{\emptyset, \mu}^{\mu} = 1$ is obvious from the definition of structure constants. 
The value of $c_{\mu, \mu}^{\mu}$ equals to $P_{\mu}(-\bm{b}_{\hat{\mu}}(t); t|\bm{b})$ 
by virtue of the vanishing property, Proposition \ref{prop:VanishingProperty(Cohomology)}. 
Therefore, we have the desired equation. 
\end{proof}

As mentioned  before the proposition,
one can obtain  a similar hook formula  by \cite[Proposition 3.2]{Molev-Sagan1999}.     
More concretely, 
under their notation, one has the following formula$:$  
\begin{equation}   \label{eqn:Molev-SaganColoredHookFormula} 
  \sum_{\emptyset = \rho^{(0)}  \rightarrow \rho^{(1)} \rightarrow \cdots \rightarrow \rho^{(l-1)}  
  \rightarrow     \rho^{(l)} = \nu}  
\dfrac{1}{  (|a_{\nu}|  - |a_{\rho^{(0)}}|)
                      \cdots 
               (|a_{\nu}|  -  |a_{\rho^{(l-1)}}|) 
    }   =  \dfrac{1}{s_{\nu}  (a_{\nu}|a)}. 
\end{equation} 
We remark that this formula can be interpreted as  a special case  
of   Nakada's  {\it colored hook formula} 
 (\cite[Corollary 7.2]{Nakada2008}), 
which is a generalization of the famous hook  formula 
due to Frame--Robinson--Thrall \cite{Frame-Robinson-Thrall1954}. 
As an  example, let us take $\nu = (2, 2)$ and $n = 2$, the number of variables.  
Then the above formula leads to 
\begin{equation*} 
\begin{array}{ll} 
   &   \dfrac{1}{(a_{3} - a_{2}) (a_{3} - a_{1}) (a_{4} - a_{1}) (a_{4} + a_{3} - a_{2} - a_{1})}  \medskip \\
     & \hspace{6.5cm}   + \;  \dfrac{1}{(a_{3} - a_{2}) (a_{4} - a_{2}) (a_{4} - a_{1}) (a_{4} + a_{3} - a_{2} - a_{1})}  \medskip \\
   &  =  \;  \dfrac{1}{(a_{3} - a_{2})(a_{3} - a_{1})(a_{4} - a_{2}) (a_{4} - a_{1})}. 
\end{array}  
\end{equation*} 
Now consider the simple system $\{ \alpha_{1},  \alpha_{2}, \alpha_{3} \}$ of the root system of type $A_{3}$. 
If one represent the simple root $\alpha_{i}$ as $a_{i} - a_{i + 1}$ for $i = 1, 2, 3$, 
then the above identity becomes 
\begin{equation}
\begin{array}{lll} 
&  \dfrac{1}{\alpha_{2} (\alpha_{1} + \alpha_{2})(\alpha_{1} + \alpha_{2} + \alpha_{3})
       (\alpha_{1} + 2\alpha_{2} + \alpha_{3}) }  
      +     \dfrac{1}{\alpha_{2} (\alpha_{2} + \alpha_{3})(\alpha_{1} + \alpha_{2} + \alpha_{3})
       (\alpha_{1} + 2\alpha_{2} + \alpha_{3}) }    \medskip \\
 & =  \dfrac{1}{\alpha_{2} (\alpha_{1} + \alpha_{2}) (\alpha_{2} + \alpha_{3}) (\alpha_{1} + \alpha_{2} + \alpha_{3})},  
\end{array} 
\end{equation} 
which agrees with the example given in \cite[p.1088]{Nakada2008}. 
When we specialize $t$ to be $0$, our factorial Hall--Littlewood 
$P$-polynomial $HP_{\lambda}(\bm{x}_{n}; 0 |\bm{b}) = P_{\lambda}(\bm{x}_{n}; 0 |\bm{b})$
does not coincide  with the factorial Schur polynomial 
$s_{\lambda}(\bm{x}_{n}|\bm{b})$.\footnote{
In the definition of the factorial Schur polynomial $s_{\lambda}(x|a)$ given 
by Molev--Sagan \cite[\S 2, (3)]{Molev-Sagan1999}, we replaced 
a doubly-infinite variable sequence $a = (a_{i})$, $i \in \Z$, 
by $\bm{b} = (b_{1}, b_{2},  \ldots)$.       
}
Thus $t = 0$ specialization of our hook formula  (\ref{eqn:OurHookFormula}) yields  
{\it another}  colored hook formula (see the example below).  
 It is well-known that the classical hook formula and its {\it shifted} analogue 
have geometric  background known as Schubert calculus, 
and are  closely related to combinatorics
 of Grassmannians, root systems, and Weyl groups (see e.g., Hiller \cite{Hiller1982}). 
In our forthcoming paper (\cite{Nakagawa-Naruse2022}), 
we shall discuss  geometric or topological background of 
our hook formula,  in relation to complex reflection groups 
$G(e, 1, n)$  and  $G(e, e, n)$ (for root systems of these groups, see 
Bremke--Malle \cite{Bremke-Malle1997} \cite{Bremke-Malle1998}).

\begin{ex}   \label{ex:OurHookFormula} 
For the partition $\mu = (2, 2)$, the explicit form of our hook length formula is 
given as follows$:$ 
First note that there exist ``two paths'' from $\mu = (2, 2)$ to $\emptyset = (\;  )$. 
Namely, 
\begin{equation*} 
 \mu = (2, 2)  \supsetneq (2, 1) \supsetneq (2) \supsetneq (1) \supsetneq (\;  ) \quad 
  \text{and}  \quad   
     \mu = (2, 2)  \supsetneq (2, 1) \supsetneq (1, 1) \supsetneq (1)   \supsetneq (\; ). 
\end{equation*} 
From the fact that  $c_{(1), \nu}^{\nu}  = c_{(1), \nu}^{\nu, (n)}(t|\bm{b}) 
=  P_{(1)}^{(n)}(-\bm{b}_{\hat{\nu}} (t); t|\bm{b})$, 
we get the following result directly$:$ 
\begin{equation*} 
\begin{array}{lll} 
     c_{(1), (\; )}^{(\; )} = 0, \medskip \\
     c_{(1), (1)}^{(1)} =  -b_{2} + t^{n-1} b_{1}, \medskip \\
     c_{(1), (1, 1)}^{(1, 1)} = (1 + t)(-b_{2} + t^{n-2}b_{1}), \medskip \\
     c_{(1), (2)}^{(2)} =  -b_{3} + t^{n-1}b_{1}, \medskip \\
     c_{(1), (2, 1)}^{(2, 1)} = - b_{3} - b_{2}  +  (1 + t)t^{n-2}b_{1}, \medskip  \\
     c_{(1), (2, 2)}^{(2, 2)}  =  (1 + t)(-b_{3} + t^{n-2}b_{1}). \medskip  
\end{array} 
\end{equation*} 
 Similarly, $\alpha_{\nu/\lambda} =  \alpha_{\nu/\lambda}^{(n)}(t)$ can be computed directly 
from the definition, and we get 
\begin{equation*} 
    \alpha_{(2, 2)/(2, 1)}  = 1 + t, \; 
   \alpha_{(2, 1)/(2)} =  1, \; 
    \alpha_{(2, 1)/(1, 1)} = 1, \; 
      \alpha_{(2)/(1)} = 1, \; 
  \alpha_{(1, 1)/(1)} = 1 + t, \; 
   \alpha_{(1)/(\; )}  =  1. 
\end{equation*} 
By Proposition $\ref{prop:VanishingProperty(Cohomology)}$,  we have, for $\mu = (2, 2)$, 
\begin{equation*} 
    P_{\mu}^{(n)} (-\bm{b}_{\hat{\mu}}(t); t|\bm{b})  
  =  (-b_{3} + t^{n-2}b_{1}) (-tb_{3}  + t^{n-2}b_{1}) (-b_{3} + b_{2}) (-tb_{3} + b_{2}). 
\end{equation*} 
Therefore our hook  formula gives the following identity$:$ 
\begin{equation}  
\begin{array}{lll} 
& \dfrac{1+t}{-t b_{3}  + b_{2}}  \cdot 
 \dfrac{1}{-t b_3 + t^{n-2} b_1}  \cdot
\dfrac{1}{-b_3 - t b_{3} +  b_{2} +  t^{n-2}b_1} \cdot
\dfrac{1}{-b_3 - t b_3 +  t^{n-2} b_1 + t^{n-1}b_1}  \medskip 
\\
& +
\dfrac{1+t}{-t b_3 + b_2} \cdot
\dfrac{1}{-b_3 - t b_3 +  b_2 + t b_2} \cdot
\dfrac{1+t}{-b_3 - t b_3 + b_2 + t^{n-2}b_1} \cdot
\dfrac{1}{-b_3 - t b_3 +  t^{n-2} b_1 + t^{n-1}b_1}  \medskip \\
& =\dfrac{1}{(- b_3 + t^{n-2} b_1)(-t b_3 + t^{n-2} b_1)(-b_3 + b_2)(-t b_3+ b_2)}.  \medskip 
\end{array} 
\end{equation} 
\end{ex}

\section{Generating functions for the universal factorial Hall--Littlewood $P$- and $Q$-functions}    \label{sec:GFUFH-LPQ}  
In this section,  by utilizing  a Gysin   formula
in complex cobordism, Proposition \ref{prop:FundamentalFormula(TypeA)(ComplexCobordism)}, 
 we  shall  derive  the generating functions for  the universal factorial   
Hall--Littlewood $P$- and $Q$-functions.

\subsection{Generating function for $HP^{\L}_{\lambda}(\x_{n}; t|\b)$ }      \label{subsec:GFUFH-LP}
Basic idea is  to apply the fundamental formula (\ref{eqn:FundamentalFormula(TypeA)(ComplexCobordism)})
repeatedly  to   the characterization (\ref{eqn:CharacterizationUFH-LP}) to 
obtain the generating function. 
Here we remark that the formula (\ref{eqn:FundamentalFormula(TypeA)(ComplexCobordism)}) 
still holds for a formal power series $f(u)  \in MU^{*}(X)[[u]]$ as well, 
and we shall use such an extended form of (\ref{eqn:FundamentalFormula(TypeA)(ComplexCobordism)}). 
 However, we will be confronted with some difficulty when we 
apply the formula to (\ref{eqn:CharacterizationUFH-LP}).     
 In order to clarify the difficulty,   let us consider 
the simplest case  $\lambda = (\lambda_{1})$ with $\lambda_{1} \geq 1$ 
 (and hence $r = 1$)   
of (\ref{eqn:CharacterizationUFH-LP}). 
  We wish to push-forward the expression  
$[x_{1}|\b]_{\L}^{\lambda_{1}}  \prod_{j=2}^{n}  (x_{1}  +_{\L}  [t] (\ox_{j}))$ 
via the Gysin map $\pi^{1}_{*}:  MU^{*}(G^{1}(E))  \longrightarrow MU^{*}(X)$. 
Naively, setting  
\begin{equation*} 
  f(u) :=  [u|\b]_{\L}^{\lambda_{1}}  \cdot   \prod_{j=2}^{n}  (u +_{\L}  [t] (\ox_{j})),
\end{equation*}  
we wish to compute $\pi^{1}_{*}(f(x_{1}))$.  However, one cannot regard $f(u)$ 
as an element of $MU^{*}(X)[[u]]$ as it is.  
Therefore we consider the following  expression   instead: 
\begin{equation*} 
    f_{1}(u) :=  \dfrac{[u|\b]_{\L}^{\lambda_{1}}} {u +_{\L}  [t] (\overline{u})} 
            \cdot  \displaystyle{\prod_{j=1}^{n}}  (u +_{\L}  [t] (\ox_{j})).  
\end{equation*} 
Since  symmetric functions in $x_{1}, \ldots, x_{n}$
 can be  regarded as elements of $MU^{*}(X)$ ($x_{1}, \ldots, x_{n}$
are the Chern roots of $E$),  the coefficients of $f_{1}(u)$ with respect to $u$ 
are actually in $MU^{*}(X)$.  
Moreover, we have $f(x_{1}) = f_{1}(x_{1})$ obviously. 
However,    it is not a formal power series in $u$ 
because  of the constant term $b_{1}b_{2} \cdots b_{\lambda_{1}}$ in the 
numerator,   and 
therefore the formula (\ref{eqn:FundamentalFormula(TypeA)(ComplexCobordism)}) 
does not apply directly.   
We further modify $f_{1}(u)$, and consider the following expression: 
\begin{equation}   \label{eqn:[x_1|b]^lambda_1(2)} 
  f_{2}(u) :=  
    \dfrac{[u|\b]_{\L}^{\lambda_{1}}} {u +_{\L}  [t](\overline{u})} 
         \left \{   \prod_{j=1}^{n} (u +_{\L}  [t] (\ox_{j}))  
                 -  \prod_{j=1}^{n}  [t] (u +_{\L}  \ox_{j}) 
         \right \}.   
\end{equation} 
The effect of subtracting the term $\prod_{j=1}^{n} [t] (u +_{\L}  \ox_{j})$
(hereafter  we call it the ``correction term'') is two-fold: 
Firstly,  the expression  $\prod_{j=1}^{n} (u +_{\L} [t] (\ox_{j})) 
 -   \prod_{j=1}^{n} [t] (u +_{\L}  \ox_{j})$
is divisible by $u$,   and   therefore $f_{2}(u)$ becomes  indeed a formal power series
 in $u$ with coefficients in $MU^{*}(X)$. Secondly, when we  substitute  $x_{1}$ for $u$,  
we have $f(x_{1}) = f_{2}(x_{1})$ by the obvious identity $\prod_{j=1}^{n} [t] (x_{1} +_{\L}  \ox_{j}) = 0$.  
Therefore  the fundamental Gysin formula 
(\ref{eqn:FundamentalFormula(TypeA)(ComplexCobordism)}) does  apply to $f_{2}(u)$,    
and the result is given as follows: 
\begin{equation*} 
\begin{array}{lll} 
   &  HP^{\L}_{(\lambda_{1})} (\x_{n}; t|\b)  
   =   \pi^{1}_{*} (f_{2}(x_{1})) 
    =  [u^{n-1}]   (f_{2}(u) \times \mathscr{S}^{\L} (E; 1/u))  \medskip \\
  & =  [u^{n-1}] 
          \left [ 
                   \dfrac{[u|\b]_{\L}^{\lambda_{1}}} {u +_{\L}  [t] (\ou_{1})} 
         \left \{   \displaystyle{\prod_{j=1}^{n}} (u +_{\L}  [t] (\ox_{j}))  
                -    \prod_{j=1}^{n}  [t] (u +_{\L}  \ox_{j}) 
          \right \}   
          \times  \mathscr{S}^{\L} (E; 1/u)   
          \right ]     \medskip  \\
 & =  [u^{-\lambda_{1}}] 
         \left [ 
                     \dfrac{1}{\mathscr{P}^{\L} (u)}  \dfrac{u} {u +_{\L}  [t](\ou)} 
                      \left \{ 
                                  \displaystyle{\prod_{j=1}^{n}}   
                                                    \dfrac{u +_{\L}  [t] (\ox_{j})} {u +_{\L} \ox_{j}}     
                -    \prod_{j=1}^{n}   \dfrac{[t] (u +_{\L}  \ox_{j})} {u +_{\L} \ox_{j}}       
                       \right \}  
                            \times  \displaystyle{\prod_{j=1}^{\lambda_{1}}} \dfrac{u +_{\L} b_{j}} {u}  
         \right ].   
\end{array} 
\end{equation*}
\begin{ex}  
As a special case of  the above formula,  the ordinary factorial Hall-Littlewood $P$-polynomial 
corresponding to the one-row $(\lambda_{1})$ is given by 
\begin{equation*} 
   HP_{(\lambda_{1})}(\x_{n}; t |\b)  
  =  [u^{-\lambda_{1}}]  
        \left [ 
                                  \dfrac{1}{1-t}  
                                      \left ( 
                                                  \prod_{j=1}^{n}  \dfrac{u - tx_{j}} {u - x_{j}}  - t^{n} 
                                     \right ) \times  \prod_{j=1}^{\lambda_{1}} \dfrac{u + b_{j}} {u} 
      \right ]. 
\end{equation*} 
In particular,  we have 
\begin{equation*} 
\begin{array}{llll} 
       HP_{(1)}(\x_{n}; t |\b) 
       &  =  [u^{-1}]  
     \left [ 
                                  \dfrac{1}{1-t}  
                                      \left ( 
                                                  \displaystyle{\prod_{j=1}^{n}}  \dfrac{u - tx_{j}} {u - x_{j}}  - t^{n} 
                                     \right ) \times    \dfrac{u + b_{1}} {u} 
      \right ]   \medskip \\
      &  =  \dfrac{1}{1-t} q_{1}(\x_{n} ; t)  +  \dfrac{1-t^{n}}{1-t} b_{1} \medskip \\
      &  =   x_{1} +  x_{2} + \cdots + x_{n}  +  (1 + t + t^{2} + \cdots + t^{n-1}) b_{1}.  
\end{array}  
\end{equation*}  
Here $q_{r}(\x_{n};t) \; (r = 0, 1, 2, \ldots)$ are given by the following generating functions$:$ 
\begin{equation*} 
           \prod_{j=1}^{n}  \left.  \dfrac{z - tx_{j}}{z - x_{j}} \right |_{z = u^{-1}} 
          =  \prod_{j=1}^{n} \dfrac{1 - tx_{j}u}{1 - x_{j}u}  = \sum_{r=0}^{\infty} q_{r}(\x_{n}; t) u^{r}. 
\end{equation*} 
\end{ex}  

For a general sequence of positive integers 
 $\lambda = (\lambda_{1}, \ldots, \lambda_{r})$ with $r \leq n$, 
we need to compute the push-forward image of 
 $[\x|\b]_{\L}^{\lambda}  \prod_{i=1}^{r} \prod_{j=i + 1}^{n} (x_{i} +_{\L} [t] (\ox_{j}))$ 
under the Gysin map $(\pi^{r, r-1, \ldots, 1})_{*}:  MU^{*}(\F \ell^{r, \ldots, 1}(E))   \longrightarrow MU^{*}(X)$.  
The  image of   $(\pi^{r, r-1, \ldots, 1})_{*}$ can be computed by applying $\pi^{r}_{*},  \pi^{r-1}_{*}, \ldots, \pi^{1}_{*}$ 
successively.  In each step,  we  use the  modification 
 such as  (\ref{eqn:[x_1|b]^lambda_1(2)}), i.e.,  subtracting  the ``correction term''. 
This technique enables us to apply the fundamental Gysin formula
 (\ref{eqn:FundamentalFormula(TypeA)(ComplexCobordism)}), 
 and  we are able to show the following result:   
\begin{lem}    \label{lem:GFUFH-LP}  
For  a sequence of positive integers  
  $\lambda = (\lambda_{1}, \ldots, \lambda_{r})$  with $r \leq n$, 
we have the following formula$:$ 
\begin{equation}    \label{eqn:GFUFH-LPI}  
\begin{array}{lll} 
 &  (\pi^{r, r-1, \ldots, 1})_{*}  \left ([\x|\b]_{\L}^{\lambda}  \displaystyle{\prod_{i=1}^{r}} 
                                                   \prod_{j=i + 1}^{n} (x_{i} +_{\L} [t] (\ox_{j}))
                                            \right  ) 
    =   \left [  \displaystyle{\prod_{i=1}^{r}}   u_{i}^{-\lambda_{i}} \right ]  \medskip \\
              &  \left (    \displaystyle{\prod_{i=1}^{r}}  \dfrac{u_{i}} {u_{i} +_{\L} [t] (\ou_{i})} 
                  \cdot  \dfrac{1}{\mathscr{P}^{\L} (u_{i})}    
                  \times   
                  \left \{ 
                            \displaystyle{\prod_{j=1}^{n}}   \dfrac{u_{i} +_{\L}  [t] (\ox_{j})} 
                                                                        {u_{i} +_{\L}  \ox_{j}} 
                                                 -       \prod_{j  = 1}^{i-1}
                                                                \dfrac{u_{i} +_{\L}  [t] (\ou_{j})} 
                                                                     {[t] (u_{i} +_{\L} \ou_{j})}                                          
                                                          \prod_{j=1}^{n} 
                                                                 \dfrac{[t] (u_{i} +_{\L} \ox_{j})} 
                                                                           {u_{i} +_{\L} \ox_{j}} 
                   \right \}    \right.   \medskip \\
     & \left.   \hspace{7cm} 
                 \times  \displaystyle{\prod_{1 \leq i < j \leq r}} 
                                      \dfrac{u_{j} +_{\L}  \ou_{i}}  {u_{j} +_{\L}  [t]  (\ou_{i})}  
                 \times 
        \displaystyle{\prod_{i=1}^{r}} \prod_{j=1}^{\lambda_{i}} \dfrac{u_{i} +_{\L}  b_{j}} {u_{i}}  \right ). \medskip 
\end{array} 
\end{equation} 
\end{lem} 
\begin{proof} 
Let us  compute the 
push-forward image of $[\x|\b]_{\L}^{\lambda}  \prod_{i=1}^{r} \prod_{j=i + 1}^{n} (x_{i} +_{\L} [t] (\ox_{j}))$ 
under the Gysin map $(\pi^{r, r-1, \ldots, 1})_{*}  =  \pi^{1}_{*} \circ \cdots \circ \pi^{r -1}_{*} \circ \pi^{r}_{*}$. 
As we explained above,   
we carry out the computation inductively. 
For $a$ ($= 1, 2, \ldots, r-1$),  we assume the following result:  
\begin{equation}   \label{eqn:InductiveStepMU(G^1(U_n-r+a+1))}  
\begin{array}{lll} 
 & (\pi^{r - a + 1} \circ \cdots  \circ \pi^{r-1} \circ  \pi^{r})_{*}  
   \left   ( [\bm{x}|\b]_{\L}^{\lambda}  
                   \displaystyle{\prod_{i=1}^{r}} \prod_{j=i + 1}^{n} (x_{i} +_{\L} [t] (\ox_{j}))   
          \right )   \medskip \\
 & =    [u_{r - a + 1}^{n-1} \cdots u_{r-1}^{n-1} u_{r}^{n-1}]   
       \left  (     
         \displaystyle{\prod_{i=1}^{r-a}}    [x_{i}|\b]_{\L}^{\lambda_{i}}
              \prod_{j = i  +1}^{n}  (x_{i}  +_{\L}  [t] (\ox_{j})) 
         \right.       \medskip \\
   &     \left.    
             \times    \displaystyle{\prod_{i=r-a + 1}^{r}} 
                          \dfrac{[u_{i}|\b]_{\L}^{\lambda_{i}}} {u_{i} +_{\L} [t] (\ou_{i})} 
                       \left \{ 
                                \prod_{j=r-a + 1}^{n} (u_{i}  +_{\L}  [t] (\ox_{j})) 
                                    -   \prod_{j= r - a + 1}^{i - 1} 
                                                      \dfrac{u_{i} +_{\L}  [t] (\ou_{j})} 
                                                               {[t] (u_{i} +_{\L}  \ou_{j})}  
                           \prod_{j= r-a + 1}^{n} [t]  (u_{i} +_{\L}  \ox_{j})    \right \}      \right.    \medskip \\
   &  \left.    \times    \displaystyle{\prod_{i=r-a + 1}^{r}}  \prod_{j=1}^{r-a}  (u_{i}  +_{\L} \ox_{j})    \times  
        \prod_{r - a + 1 \leq i < j \leq r}  \dfrac{ u_{j} +_{\L}  \ou_{i}}
                                                                 {u_{j}  +_{\L}  [t]  (\ou_{i})}   
                              \times   \displaystyle{\prod_{i=r - a + 1}^{r}}   
                                 \mathscr{S}^{\L} (E; 1/u_{i})   \right ).     
\end{array} 
\end{equation}
We would like to push-forward this  formula via the Gysin map 
\begin{equation*} 
    \pi^{r - a}_{*}:   MU^{*}(G^{1}(U_{n - r + a + 1})) 
 \longrightarrow 
MU^{*}(G^{1}(U_{n -r + a + 2})).  
\end{equation*} 
Taking (\ref{eqn:[x_1|b]^lambda_1(2)}) into account, we modify the right-hand side 
of (\ref{eqn:InductiveStepMU(G^1(U_n-r+a+1))}) 
as 
\begin{equation*} 
\begin{array}{lll} 
&  [u_{r - a + 1}^{n-1} \cdots u_{r-1}^{n-1} u_{r}^{n-1}]  
     \left  (     
         \displaystyle{\prod_{i=1}^{r-a-1}}    [x_{i}|\b]_{\L}^{\lambda_{i}}
              \prod_{j = i  +1}^{n}  (x_{i}  +_{\L}  [t] (\ox_{j})) 
         \right.       \medskip \\
  & \times     \dfrac{[x_{r-a}|\b]_{\L}^{\lambda_{r-a}}} {x_{r-a} +_{\L}  [t](\ox_{r-a})}  
                   \left  \{    \displaystyle{\prod_{j=r-a}^{n}} (x_{r-a}  +_{\L} [t](\ox_{j}))  
                                  -   \prod_{j=r-a}^{n}  [t] (x_{r-a} +_{\L}  \ox_{j})   
                   \right \}   \medskip \\
   &          \times    \displaystyle{\prod_{i=r-a + 1}^{r}} 
                          \dfrac{[u_{i}|\b]_{\L}^{\lambda_{i}}} {u_{i} +_{\L} [t] (\ou_{i})} 
                       \left \{ 
                                 \dfrac{1}{u_{i}  +_{\L}  [t] (\ox_{r-a})}    
                                       \prod_{j=r-a}^{n} (u_{i}  +_{\L}  [t] (\ox_{j}))  
                       \right.   
                                       \medskip \\
  &   \left.   \hspace{4.5cm}    -   \displaystyle{\prod_{j= r - a + 1}^{i - 1}} 
                                                      \dfrac{u_{i} +_{\L}  [t] (\ou_{j})} 
                                                               {[t] (u_{i} +_{\L}  \ou_{j})}  
                               \cdot  \dfrac{1}{[t](u_{i} +_{\L}  \ox_{r-a})}   
                    \prod_{j= r-a}^{n} [t]  (u_{i} +_{\L}  \ox_{j})  
            \right \}         \medskip 
\end{array}  
\end{equation*} 
\begin{equation*} 
\begin{array}{lll} 
   &   \times    \displaystyle{\prod_{i=r-a + 1}^{r}}  \prod_{j=1}^{r-a-1} 
           (u_{i}  +_{\L} \ox_{j})   \times \prod_{i=r-a+1}^{r} (u_{i} +_{\L} \ox_{r-a})    \times  
        \prod_{r - a + 1 \leq i < j \leq r}  \dfrac{ u_{j} +_{\L}  \ou_{i}}
                                                                 {u_{j}  +_{\L}  [t]  (\ou_{i})}      \medskip \\
      &   \left.    \hspace{8.5cm}  
         \times   \displaystyle{\prod_{i=r - a + 1}^{r}}       
                      \mathscr{S}^{\L}  (E; 1/u_{i}) 
      \right ).   
\end{array} 
\end{equation*} 
Then,  apply the fundamental Gysin formula (\ref{eqn:FundamentalFormula(TypeA)(ComplexCobordism)}). 
In the above modification,  we divide  both denominator  and numerator of 
 $\dfrac{1}{u_{i} +_{\L} [t](\overline{x}_{r-a})}$ by $u_{i}$,  and consider it  as a formal power 
series in $x_{r-a}$.    We also treat 
$\dfrac{1}{[t] (u_{i} +_{\L} \overline{x}_{r-a})}$ in the same manner. 
Under this remark, the result  is just replacing $x_{r-a}$ by the formal variable $u_{r-a}$, and 
multiplying by $\mathscr{S}^{\L} (U_{n - r + a + 1}; 1/u_{r-a})$. 
Then, 
we extract the coefficient of  $u_{r-a}^{n - r + a}$.  
   Since we know from (\ref{eqn:SegreSeries(ComplexCobordism)})
\begin{equation*} 
    \mathscr{S}^{\L} (U_{n - r + a + 1}; 1/u_{r-a})  
     =  u_{r-a}^{-(r - a - 1)} \prod_{j=1}^{r-a-1} (u_{r-a} +_{\L}  \ox_{j})  \times \mathscr{S}^{\L} (E; 1/u_{r-a}), 
\end{equation*} 
we see directly that the formula (\ref{eqn:InductiveStepMU(G^1(U_n-r+a+1))}) holds 
for $a + 1$. 
Therefore, when $a = r$, we have 
\begin{equation*} 
\begin{array}{lll} 
  &  (\pi^{r, r-1, \ldots, 1})_{*} 
  \left ([\x|\b]_{\L}^{\lambda}  \displaystyle{\prod_{i=1}^{r}}  
                        \prod_{j=i + 1}^{n} (x_{i} +_{\L}  [t] (\ox_{j}))  
                           \right )   \medskip  \\  
  & =  [u_{1}^{n-1} \dots u_{r}^{n-1}] 
  \left [    \displaystyle{\prod_{i=1}^{r}} 
                         \dfrac{ [u_{i}|\b]_{\L}^{\lambda_{i}}}{u_{i} +_{\L}  [t] (\ou_{i})}    
        \left \{     \prod_{j=1}^{n} (u_{i} +_{\L}  [t] (\ox_{j}))   
            -  \prod_{j=1}^{i-1}  
                    \dfrac{u_{i} +_{\L} [t] (\ou_{j})} 
                                       {[t] (u_{i} +_{\L}  \ou_{j})}   
                         \prod_{j=1}^{n}[t]  (u_{i}+_{\L}   \ox_{j})  
          \right \}     \right.  \medskip \\ 
   & \hspace{7.5cm}   \left.  \times \displaystyle{\prod_{1 \leq i < j \leq r}}  
                  \dfrac{u_{j} +_{\L}  \ou_{i}} {u_{j} +_{\L} [t] (\ou_{i})}  \times 
      \prod_{i=1}^{r}  \mathscr{S}^{\L}  (E; 1/u_{i})     \right ]. 
\end{array} 
\end{equation*} 
Then, using the Segre series (\ref{eqn:SegreSeries(ComplexCobordism)}),  
we obtain the required  formula.  
\end{proof}  
By a characterization (\ref{eqn:CharacterizationUFH-LP}),  the left-hand side of (\ref{eqn:GFUFH-LPI}) 
is $HP^{\L}_{\lambda}(\x_{n}; t |\b)$, and hence the right-hand side gives a  generating function 
for $HP^{\L}_{\lambda}(\x_{n}; t|\b)$.

  Let us simplify this generating function in the following way: 
First note that 
\begin{equation*} 
    \displaystyle{\prod_{1 \leq i < j \leq r}} 
                                      \dfrac{u_{j} +_{\L}  \ou_{i}}  {u_{j} +_{\L}  [t]  (\ou_{i})}  
        =  \prod_{1 \leq j < i \leq r}   \dfrac{u_{i} +_{\L}  \ou_{j}}  {u_{i} +_{\L}  [t]  (\ou_{j})}  
        =  \prod_{i=1}^{r}  \prod_{j=1}^{i-1}   \dfrac{u_{i} +_{\L}  \ou_{j}}  {u_{i} +_{\L}  [t]  (\ou_{j})}.   
\end{equation*} 
Therefore if we put 
\begin{equation*} 
\begin{array}{llll} 
  &       {\mathcal{HP}}^{\L, (n)} _{i, \lambda_{i}}(u_{1}, u_{2},  \ldots, u_{i}|\bm{b})
   :=   \dfrac{u_{i}} {u_{i} +_{\L} [t] (\ou_{i})} 
                  \cdot  \dfrac{1}{\mathscr{P}^{\L} (u_{i})}     \medskip \\
 & \hspace{1cm}  \times \left (  
             \displaystyle{\prod_{j=1}^{n}}   \dfrac{u_{i} +_{\L}  [t] (\ox_{j})} 
                                                                        {u_{i} +_{\L}  \ox_{j}} 
                   \prod_{j=1}^{i-1}   \dfrac{u_{i} +_{\L}  \ou_{j}}  {u_{i} +_{\L}  [t]  (\ou_{j})}
                      \prod_{j=1}^{\lambda_{i}} \dfrac{u_{i} +_{\L}  b_{j}} {u_{i}} 
        \right.     \medskip   \\
    & \left.     \hspace{6.5cm}     -          \;           
                                       \displaystyle{\prod_{j=1}^{n}}  
                                           \dfrac{[t] (u_{i} +_{\L} \ox_{j})} 
                                           {u_{i} +_{\L} \ox_{j}}    
                                       \prod_{j  = 1}^{i-1}
                                                                \dfrac{u_{i} +_{\L} \ou_{j}} 
                                                                     {[t] (u_{i} +_{\L} \ou_{j})}    
                                        \prod_{j=1}^{\lambda_{i}} \dfrac{u_{i} +_{\L}  b_{j}} {u_{i}} 
                                              \right ),      \medskip   \\
  &  {\mathcal{HP}}^{\L, (n)}_{\lambda} (\bm{u}_{r}|\bm{b}) =  
    {\mathcal{HP}}^{\L, (n)}_{\lambda}   (u_{1},  u_{2}, \ldots, u_{r}|\bm{b}) 
      :=   \displaystyle{\prod_{i=1}^{r}}  
    {\mathcal{HP}}^{\L, (n)}_{i,  \lambda_{i}}(u_{1}, u_{2}, \ldots, u_{i}|\bm{b}), \medskip 
\end{array} 
\end{equation*} 
then,  one has 
\begin{equation}    \label{eqn:GFUFH-LP} 
      HP^{\L}_{\lambda}(\x_{n}; t|\b)  = [\bm{u}^{-\lambda}]  
     \left (  
                 {\mathcal{HP}}^{\L, (n)}_{\lambda}(\bm{u}_{r}|\bm{b})
     \right  ). 
\end{equation} 
Moreover,  observe that 
\begin{itemize} 
  \item    $\dfrac{u_{i}} {u_{i} +_{\L} [t] (\ou_{i})} 
                  \cdot  \dfrac{1}{\mathscr{P}^{\L} (u_{i})}$is a formal power series in  $u_{i}$.

  \item   $\displaystyle{\prod_{j=1}^{n}}  
                                           \dfrac{[t] (u_{i} +_{\L} \ox_{j})} 
                                           {u_{i} +_{\L} \ox_{j}}    
                                       \prod_{j  = 1}^{i-1}
                                                                \dfrac{u_{i} +_{\L} \ou_{j}} 
                                                                     {[t] (u_{i} +_{\L} \ou_{j})}$  is regarded as 
       a formal power series in $u_{i}$  with constant term $t^{n - i + 1}$.

  \item   $\displaystyle{\prod_{j=1}^{\lambda_{i}}}  \dfrac{u_{i} +_{\L}  b_{j}} {u_{i}}$  is a formal Laurent series 
            in $u_{i}$ whose lowest degree term is  $u_{i}^{-\lambda_{i}}$ with  coefficient 
         $\displaystyle{\prod_{j=1}^{\lambda_{i}}} b_{j}$.  
\end{itemize} 
Taking the above observation into account, we put
\begin{equation*} 
\begin{array}{llll} 
  &    \widetilde{\mathcal{HP}}^{\L, (n)}_{i,  \lambda_{i}}(u_{1}, u_{2},  \ldots, u_{i}|\bm{b})
   :=   \dfrac{u_{i}} {u_{i} +_{\L} [t] (\ou_{i})} 
                  \cdot  \dfrac{1}{\mathscr{P}^{\L} (u_{i})}     \medskip \\
 &   \hspace{3.3cm}   \times \left (  
             \displaystyle{\prod_{j=1}^{n}}   \dfrac{u_{i} +_{\L}  [t] (\ox_{j})} 
                                                                        {u_{i} +_{\L}  \ox_{j}} 
                   \prod_{j=1}^{i-1}   \dfrac{u_{i} +_{\L}  \ou_{j}}  {u_{i} +_{\L}  [t]  (\ou_{j})}
                      \prod_{j=1}^{\lambda_{i}} \dfrac{u_{i} +_{\L}  b_{j}} {u_{i}}   
                               -                   
                                         t^{n  - i + 1}  
                                        \prod_{j=1}^{\lambda_{i}} \dfrac{b_{j}} {u_{i}} 
                                              \right ),      \medskip   \\
  &    \widetilde{\mathcal{HP}}^{\L, (n)}_{\lambda}  (\bm{u}_{r}|\bm{b}) =   
            \widetilde{\mathcal{HP}}^{\L, (n)}_{\lambda}   (u_{1},  u_{2}, \ldots, u_{r}|\bm{b}) 
      :=   \displaystyle{\prod_{i=1}^{r}}    
                \widetilde{\mathcal{HP}}^{\L, (n)}_{i,  \lambda_{i}}(u_{1}, u_{2}, \ldots, u_{i}|\bm{b}).  \medskip 
\end{array} 
\end{equation*} 
Then, we can reduce $\mathcal{HP}^{\L, (n)}_{\lambda}  (\bm{u}_{r}|\bm{b})$ 
to $\widetilde{\mathcal{HP}}^{\L, (n)}_{\lambda}   (\bm{u}_{r}|\bm{b})$, 
and  we  obtain from (\ref{eqn:GFUFH-LP})   the following result:  
\begin{theorem} [Generating function for $HP^{\L}_{\lambda} (\bm{x}_{n}; t|\b)$]    \label{thm:GFUFH-LP}  
For a sequence of positive integers  $\lambda = (\lambda_{1}, \ldots, \lambda_{r})$ with $r \leq n$, 
 the universal factorial Hall--Littlewood $P$-function 
$HP^{\L}_{\lambda}(\x_{n}; t|\b)$ is  the 
coefficient of $\bm{u}^{-\lambda} =  u_{1}^{-\lambda_{1}} u_{2}^{-\lambda_{2}} \cdots u_{r}^{-\lambda_{r}}$ 
in $\widetilde{\mathcal{HP}}^{\L, (n)}_{\lambda}(u_{1},  u_{2}, \ldots, u_{r}|\bm{b})$.  Thus 
\begin{equation*} 
         HP^{\L}_{\lambda}(\x_{n}; t|\b) 
    =  [\bm{u}^{-\lambda}]  \left ( 
                                     \widetilde{\mathcal{HP}}^{\L, (n)}_{\lambda}   (\bm{u}_{r}|\bm{b}) 
                                  \right ). 
\end{equation*}   
\end{theorem}  

If we specialize the universal formal group law 
 $F_{\L}(u, v) = u +_{\L} v$ to  $F_{a}(u, v) = u + v$, the above generating function 
becomes  a relatively simple form:  
\begin{equation*}   \label{enq:GFFH-LP} 
\begin{array}{lll} 
      &  \widetilde{\mathcal{HP}}^{(n)}_{i, \lambda_{i}}(u_{1}, u_{2},  \ldots, u_{i}|\bm{b})
   =   \dfrac{1}{1-t}  
        \left (  
             \displaystyle{\prod_{j=1}^{n}}   \dfrac{u_{i}  - tx_{j}} 
                                                                        {u_{i}  -  x_{j}} 
                   \prod_{j=1}^{i-1}   \dfrac{u_{i}  -  u_{j}}  {u_{i}  - tu_{j}}
                      \prod_{j=1}^{\lambda_{i}} \dfrac{u_{i} +  b_{j}} {u_{i}}   
                               -                   
                                         t^{n  - i + 1}  
                                        \prod_{j=1}^{\lambda_{i}} \dfrac{b_{j}} {u_{i}} 
                                              \right ),      \medskip   \\
  &   \widetilde{\mathcal{HP}}^{(n)}_{\lambda}  (\bm{u}_{r}|\bm{b}) =   
        \widetilde{\mathcal{HP}}^{(n)}_{\lambda}   (u_{1},  u_{2}, \ldots, u_{r}|\bm{b}) 
      =  \displaystyle{\prod_{i=1}^{r}}   
            \widetilde{\mathcal{HP}}^{(n)}_{i,  \lambda_{i}}(u_{1}, u_{2}, \ldots, u_{i}|\bm{b}).  \medskip 
\end{array} 
\end{equation*} 
Thus we have  the following corollary: 
\begin{cor}[Generating function for $HP_{\lambda}(\bm{x}_{n}; t|\b)$]     \label{cor:GFFH-LP}  
For a sequence of positive integers  $\lambda = (\lambda_{1}, \ldots, \lambda_{r})$ with $r \leq n$, 
the factorial Hall--Littlewood $P$-polynomial 
$HP_{\lambda}(\x_{n}; t|\b)$ is  the 
coefficient of $\bm{u}^{\lambda} =   u_{1}^{-\lambda_{1}} u_{2}^{-\lambda_{2}} \cdots u_{r}^{-\lambda_{r}}$ 
in  $\widetilde{\mathcal{HP}}^{(n)}_{\lambda}(u_{1},  u_{2}, \ldots, u_{r}|\bm{b})$. Thus 
\begin{equation*} 
            HP_{\lambda}(\bm{x}_{n}; t|\b)  = [\bm{u}^{-\lambda}] \left ( 
                                                   \widetilde{\mathcal{HP}}^{(n)}_{\lambda}  (\bm{u}_{r}|\bm{b}) 
                                                                                    \right ).  
\end{equation*}
\end{cor} 

 \subsection{Generating function for $HQ^{\L}_{\lambda}(\x_{n} ; t |\b)$}
Next we shall derive the generating function for $HQ^{\L}_{\lambda}(\x_{n} ; t |\b)$.   
 In the  one-row case $\lambda = (\lambda_{1})$  of (\ref{eqn:CharacterizationUFH-LQ}), 
we push-forward the expression 
\begin{equation*} 
\begin{array}{lll} 
     [[x_{1};  t|\b]]_{\L}^{\lambda_{1}} \displaystyle{\prod_{j=2}^{n}}   (x_{1}  +_{\L}  [t] (\ox_{j}))  
 &  =  \dfrac{ (x_{1} +_{\L}  [t] (\ox_{1})) [x_{1}|\b]_{\L}^{\lambda_{1} - 1} } {x_{1} +_{\L}  [t] (\ox_{1})} 
        \displaystyle{\prod_{j=1}^{n}}   (x_{1} +_{\L}  [t] (\ox_{j}))   \medskip \\
  & = [x_{1}|\b]_{\L}^{\lambda_{1} - 1}  \displaystyle{\prod_{j=1}^{n}}   (x_{1} +_{\L}  [t] (\ox_{j})),   \medskip 
\end{array}  
\end{equation*}  
which is a formal power series in $x_{1}$, 
and therefore (\ref{eqn:FundamentalFormula(TypeA)(ComplexCobordism)})  applies without any problem,  
 and one   can  obtain the following: 
\begin{equation*} 
\begin{array}{llll} 
 &   HQ^{\L}_{\lambda}(\x_{n}; t|\b)   \medskip \\ 
         & =  [u_{1}^{-\lambda_{1}} \cdots u_{r}^{-\lambda_{r}}]   
      \left ( 
              \displaystyle{\prod_{i=1}^{r}}  \dfrac{1}{\mathscr{P}^{\L} (u_{i})}     
               \prod_{j=1}^{n}  \dfrac{u_{i} +_{\L}   [t](\overline{x}_{j})}  {u_{i} +_{\L} \overline{x}_{j}} 
              \prod_{1 \leq i < j \leq r}   \dfrac{u_{j} +_{\L} \overline{u}_{i}} 
                                                                   {u_{j} +_{\L} [t](\overline{u}_{i})}  
         \times \prod_{i=1}^{r} \prod_{j=1}^{\lambda_{i}-1}  \dfrac{u_{i} +_{\L} b_{j}}{u_{i}} 
    \right ).   \medskip \\
\end{array} 
\end{equation*}  
For  each non-negative  integer $k$, we set 
\begin{equation*} 
 \mathcal{HQ}^{\L, (n)}_{k}(u|\b)   
            :=     
                                    \dfrac{1}{\mathscr{P}^{\L} (u)} 
                                     \displaystyle{\prod_{j=1}^{n}}   \dfrac{u +_{\L} [t](\overline{x}_{j})} 
                                                                {u +_{\L} \overline{x}_{j}}    
       \times \prod_{j=1}^{k} \dfrac{u +_{\L}  b_{j}}{u}.  
\end{equation*} 
For a sequence of positive integers 
 $\lambda = (\lambda_{1}, \ldots, \lambda_{r})$ with $r \leq n$, 
we set 
\begin{equation*}   
      \mathcal{HQ}^{\L, (n)}_{\lambda}  (\bm{u}_{r}|\b)
    = \mathcal{HQ}^{\L, (n)}_{\lambda}   (u_{1},  u_{2},  \ldots, u_{r}|\b)  
             :=  \displaystyle{\prod_{i=1}^{r}}  \mathcal{HQ}^{\L, (n)}_{\lambda_{i}-1} (u_{i}|\b)     
         \displaystyle{\prod_{1 \leq i < j \leq r}}   \dfrac{u_{j} +_{\L} \overline{u}_{i}} 
                                                                   {u_{j} +_{\L} [t](\overline{u}_{i})}.    
\end{equation*} 
Thus we have the following result: 
\begin{theorem} [Generating function for $HQ^{\L}_{\lambda} (\bm{x}_{n}; t | \bm{b})$] 
  \label{thm:GFUFH-LQ}   
  For a sequence of positive integers  $\lambda = (\lambda_{1}, \ldots, \lambda_{r})$
with $r \leq n$,   
 the universal factorial Hall--Littlewood $Q$-function 
$HQ^{\L}_{\lambda}(\x_{n}; t|\b)$ is the coefficient of 
$\bm{u}^{-\lambda} = u_{1}^{-\lambda_{1}} u_{2}^{-\lambda_{2}} \cdots u_{r}^{-\lambda_{r}}$ 
in     $\mathcal{HQ}^{\L, (n)}_{\lambda}  (u_{1}, u_{2}, \ldots, u_{r}|\b)$. Thus 
\begin{equation*} 
    HQ^{\L}_{\lambda}(\x_{n}; t|\b)  = [\bm{u}^{-\lambda}]  
    \left ( \mathcal{HQ}^{\L, (n)}_{\lambda}   (\bm{u}_{r}|\b )  \right ). 
\end{equation*}  
\end{theorem} 

\section{Application  of generating functions}   \label{sec:ApplicationGF} 
\subsection{$e$-Cancellation property} 
A symmetric polynomial $f(x_{1}, \ldots, x_{n})$ with coefficients in 
$\Z$   has the {\it $Q$-cancellation property}  if the following holds: 
when the substitution $x_{1} = a$, $x_{2} = -a$, $a$ an indeterminate, 
is made in $f$,  the resulting polynomial is independent of $a$  
(Pragacz \cite[\S 2]{Pragacz1991}). 
It is known that the Schur $P$- and $Q$-polynomials 
  satisfy  this cancellation property. 
The notion of the $Q$-cancellation property is generalized in the following way: 
Let $e \geq 2$ be a fixed positive integer, and 
$\zeta = \zeta_{e}$ be  a primitive $e$th root of unity.  
We define a sequence 
$\bm{a}^{e}(\zeta) :=  (a, \zeta a,  \zeta^{2} a, \ldots,  \zeta^{e-1} a)$. 
Suppose that $e \leq n$. Then a symmetric polynomial $f(x_{1}, \ldots, x_{n})$ with coefficients 
in $\Z [\zeta]$ has the {\it $e$-cancellation property} 
if $f(\bm{a}^{e}(\zeta), x_{e+1}, \ldots, x_{n}) 
= f(a,  \zeta a,  \zeta^{2} a, \ldots,  \zeta^{e-1} a, x_{e+1}, \ldots, x_{n})$ 
does not depend on $a$.   In the case $e = 2$,  this property is nothing but 
the $Q$-cancellation property.  
By specializing $t$ to be $\zeta$, the factorial Hall--Littlewood 
polynomials $HP_{\lambda}(\x_{n}; \zeta|\b)$ and 
$HQ_{\lambda}(\x_{n}; \zeta|\b)$ are symmetric polynomials 
 with coefficients 
in $\Z [\zeta] \otimes \Z[\b] = \Z [\zeta] \otimes \Z[b_{1}, b_{2}, \ldots]$. 
Thus one can ask if these symmetric polynomials have 
the $e$-cancellation property or not.  
In this subsection,   as the  first application of our generating functions,  
we  shall  show 
the  $e$-cancellation property  of the factorial Hall--Littlewood $P$- and 
$Q$-polynomials. 
\begin{prop} [$e$-Cancellation property]    \label{prop:e-CancellationProperty}  
  Assume that $e \leq n$.  
  The factorial Hall--Littlewood polyonomials $HP_{\lambda}(\x_{n}; \zeta|\b)$ 
and $HQ_{\lambda}(\x_{n}; \zeta |\b)$ have the $e$-cancellation property.   
\end{prop} 
\begin{proof} 
Let $r$ be the length of $\lambda$. 
Then, by Corollary \ref{cor:GFFH-LP}, $HP_{\lambda}(\x_{n}; \zeta |\b)$ 
 is given as the  coefficient of  the following generating function
\begin{equation}   \label{eqn:GFFH-LP}  
   \dfrac{1}{(1-\zeta)^{r}}  
    \prod_{i=1}^{r}  
 \left (  
             \displaystyle{\prod_{j=1}^{n}}   \dfrac{u_{i}  - \zeta x_{j}} 
                                                                        {u_{i}  -  x_{j}} 
                   \prod_{j=1}^{i-1}   \dfrac{u_{i}  -  u_{j}}  {u_{i}  - \zeta u_{j}}
                      \prod_{j=1}^{\lambda_{i}} \dfrac{u_{i} +  b_{j}} {u_{i}}   
                               -                   
                                         \zeta^{n  - i + 1}  
                                        \prod_{j=1}^{\lambda_{i}} \dfrac{b_{j}} {u_{i}} 
  \right ). 
\end{equation} 
Substituting $(x_{1},  \ldots, x_{e})$ with $\bm{a}^{e}(\zeta)$ in each 
factor  $\displaystyle{\prod_{j=1}^{n}}  \dfrac{u_{i} - \zeta x_{j}} {u_{i} - x_{j}}$,  
we have 
\begin{equation*} 
   \prod_{j=1}^{e} \dfrac{u_{i} -  \zeta^{j} a} {u_{i} -  \zeta^{j-1} a}
  \times \prod_{j=e  + 1}^{n}   \dfrac{u_{i} - \zeta x_{j}} {u_{i} - x_{j}}  
=  \prod_{j=e  + 1}^{n}   \dfrac{u_{i} - \zeta x_{j}} {u_{i} - x_{j}}, 
\end{equation*} 
since $\zeta^{e} = 1$. Therefore, (\ref{eqn:GFFH-LP}) does not depend on 
$a$ and $x_{1},  \ldots, x_{e}$.  From this, the $e$-cancellation property 
of $HP_{\lambda}(\x_{n}; \zeta|\b)$ follows.  
By Theorem \ref{thm:GFUFH-LQ},  $HQ_{\lambda}(\x_{n}; t |\b)$ 
 is given as the  coefficient of  the following generating function
\begin{equation*}    
  \prod_{i=1}^{r}  \displaystyle{\prod_{j=1}^{n}}   \dfrac{u  - tx_{j}} 
                                                                {u  -  x_{j}}    
       \times \prod_{j=1}^{\lambda_{i}-1} \dfrac{u +   b_{j}}{u}
    \times  \displaystyle{\prod_{1 \leq i < j \leq r}}   \dfrac{u_{j}  - u_{i}} 
                                                                   {u_{j}  - tu_{i}}.     
\end{equation*} 
From this,  the $e$-cancellation property of $HQ_{\lambda}(\x_{n}; \zeta|\b)$ 
follows by the same reason as that of $HP_{\lambda}(\x_{n}; \zeta |\b)$.  
\end{proof}

\begin{rem} 
In the case of the universal factorial Hall--Littlewood $P$- and $Q$-functions, 
we consider the substitution $\bm{x}_{n}$ with the sequence $\bm{a}^{e}[\zeta] 
:= (a, [\zeta](a),  [\zeta^2](a),   \ldots, [\zeta^{e-1}](a))$.  Then, 
using Theorems $\ref{thm:GFUFH-LP}$ and  $\ref{thm:GFUFH-LQ}$, one can verify  easily 
that $HP^{\L}_{\lambda}(\bm{x}_{n}; \zeta |\bm{b})$ and $HQ^{\L}_{\lambda}(\bm{x}_{n}; \zeta |\bm{b})$ 
satisfy the $e$-cancellation property too.  
\end{rem} 

\subsection{Pfaffian formula for $GQ_{\nu} (\bm{x}_{n}|\b)$}      \label{subsec:PfaffianFormulaFGQP}
Another application  of  our generating functions, 
we shall  derive  the 
{\it Pfaffian formulas} for the  the   $K$-theoretic factorial $Q$-polynomial   $GQ_{\nu}(\x_{n}|\b)$,   
which  seems to be new.   
In what follows,  we assume that the length $\ell (\nu)$ of a  strict partition 
$\nu$ is $2m$ (even).  
We consider the specialization from  $F_{\L}(u, v) = u +_{\L} v  $ to $F_{m} (u, v) =  u  \oplus v$
with $t = -1$. 
Then $HQ^{\L}_{\nu}(\bm{x}_{n}; t |\bm{b})$ specializes to  $GQ_{\nu}(\bm{x}_{n}|\bm{b})$, 
and,  the generating function  $\mathcal{HQ}^{\L, (n)}_{\nu} (\bm{u}_{2m}|\b)$
 reduces to 
\begin{equation}  \label{eqn:GFFGQ}   
    \mathcal{GQ}^{(n)}_{\nu}  (\bm{u}_{2m} |\b)
      = \prod_{i=1}^{2m} \mathcal{GQ}^{(n)}_{\nu_{i}-1}  (u_{i}|\b)  
                 \prod_{1 \leq i  < j \leq 2m}   \dfrac{u_{j} \ominus u_{i}} 
                                                                {u_{j} \oplus u_{i}}, 
\end{equation} 
where, for each non-negative integer $k$,  we define
\begin{equation*} 
\begin{array}{lll}  
   \mathcal{GQ}^{(n)}_{k}(u|\b)   & :=     
                  \dfrac{1}{ 1 + \beta u}  \displaystyle{\prod_{j=1}^{n}}   \dfrac{u \oplus x_{j}} 
                                                                                        {u \ominus x_{j}} 
                     \times \prod_{j=1}^{k}  \dfrac{u \oplus b_{j}}{u}   \medskip \\
    & =   \dfrac{1} {1 + \beta u} 
                     \displaystyle{\prod_{j=1}^{n}} 
                                \dfrac{1 + (u^{-1} + \beta) x_{j}} {1  + (u^{-1} + \beta) \overline{x}_{j}} 
                  \times \prod_{j=1}^{k}  \{ 1 + (u^{-1} + \beta) b_{j} \}.  
\end{array}   
\end{equation*} 
This is a generating function for the factorial $K$-theoretic $Q$-polynomials $GQ_{\nu}(\bm{x}_{n}|\bm{b})$.  
Here we recall from Ikeda--Naruse \cite[Lemma 2.4]{Ikeda-Naruse2013} the following formula: 
\begin{equation*} 
      \mathrm{Pf}  \, \left ( \dfrac{x_{j} - x_{i}} 
                                             {x_{j} \oplus x_{i}} 
                            \right )_{1 \leq i < j \leq 2m} 
     =      \prod_{1 \leq  i  < j \leq 2m}  \dfrac{x_{j} - x_{i}} {x_{j} \oplus x_{i}}. 
\end{equation*} 
Thus we can compute 
\begin{equation*} 
\begin{array}{llll}  
  &    \mathcal{GQ}^{(n)}_{\nu}(\bm{u}_{2m}|\b)
 =  \displaystyle{\prod_{i=1}^{2m}}   \mathcal{GQ}^{(n)}_{\nu_{i}-1}   (u_{i}|\b)
    \displaystyle{\prod_{1 \leq i < j \leq 2m}}   
                  \dfrac{u_{j} \ominus u_{i}} 
                           {u_{j} \oplus u_{i}}     \medskip  \\ 
    & =  \displaystyle{\prod_{i=1}^{2m}}  \mathcal{GQ}^{(n)}_{\nu_{i}-1}  (u_{i}|\b)  
   \displaystyle{\prod_{i=1}^{2m}}   \dfrac{1}{(1 + \beta u_{i})^{2m -i}}   
                             \cdot  \mathrm{Pf} \,  
                                 \left (  
                                  \dfrac{u_{j} -  u_{i}} 
                                           {u_{j}  \oplus  u_{i}} 
                                  \right )_{1 \leq i < j \leq 2m}       \medskip \\
   &  =   \mathrm{Pf} \, \left (  
                                             \mathcal{GQ}^{(n)}_{\nu_{i}-1}  (u_{i}|\b)  \; 
                                            \mathcal{GQ}^{(n)}_{\nu_{j}-1} (u_{j}|\b)
                                              \dfrac{1}{(1 + \beta  u_{i})^{2m - i}}  
                                              \dfrac{1}{(1 + \beta  u_{j})^{2m - j}}     
                                           \cdot   \dfrac{u_{j} - u_{i}} 
                                           {u_{j}  \oplus u_{i}} 
                           \right )_{1 \leq i < j \leq 2m}   \medskip \\
&  =   \mathrm{Pf}  \left (  
                                      (1 + \beta u_{i})^{i + 1 - 2m} (1 + \beta u_{j})^{j - 2m} 
                                        \mathcal{GQ}^{(n)}_{\nu_{i}-1}  (u_{i}|\b) \; 
                                        \mathcal{GQ}^{(n)}_{\nu_{j}-1} (u_{j}|\b)   \cdot 
                                       \dfrac{u_{j}  \ominus u_{i}} {u_{j}  \oplus  u_{i}}
                             \right )_{1 \leq i < j \leq 2m}.       \medskip 
\end{array} 
\end{equation*} 
For non-negative integers $p, q \geq 0$ and positive integers $k, l \geq 1$, 
we define  polynomials $GQ_{(k, l)}^{(p, q)}(\x_{n}|\b)$ to be 
\begin{equation*} 
   GQ_{(k, l)}^{(p, q)} (\x_{n}|\b) :=  [u_{1}^{-k} u_{2}^{-l}] 
          \left (    \mathcal{GQ}^{(n)}_{p-1}(u_{1}|\b)  \;  
                     \mathcal{GQ}^{(n)}_{q-1} (u_{2}|\b)  
           \cdot \dfrac{u_{2} \ominus u_{1}} {u_{2} \oplus u_{1}}  
         \right ).  
\end{equation*} 
Note that, by Theorem \ref{thm:GFUFH-LQ},  we have 
$GQ_{(k, l)} (\x_{n}|\b)    =  GQ_{(k, l)}^{(k, l)} (\x_{n}|\b)$ for 
positive integers $k > l > 0$.  
Then, by Theorem  \ref{thm:GFUFH-LQ},  one obtains 
\begin{equation*} 
\begin{array}{llll} 
  & GQ_{\nu} (\x_{n}|\b)   =  \left [ \displaystyle{\prod_{i=1}^{2m}} u_{i}^{-\nu_{i}} \right ] 
                                  \Biggl (   \mathrm{Pf} \, \Biggl (
                                           (1 + \beta u_{i})^{i + 1  - 2m} 
                                           (1 + \beta u_{j})^{j - 2m}   
                                            \medskip \\
          &    \hspace{6cm}  \times    \;  \mathcal{GQ}^{(n)}_{\nu_{i}-1} (u_{i}|\b) \;  
                                                     \mathcal{GQ}^{(n)}_{\nu_{j}-1} (u_{j}|\b)
                          \times  \left. \left. 
                                   \dfrac{u_{j} \ominus u_{i}} 
                                           {u_{j}  \oplus u_{i}} 
                                         \right )_{1 \leq  < j \leq 2m} \right )   
                               \medskip \\
  & =  \mathrm{Pf}  
                                 \left (   [u_{i}^{-\nu_{i}} u_{j}^{-\nu_{j}} ]  
                                            \left ( 
                                              (1 + \beta  u_{i})^{i + 1 - 2m}  
                                              (1 + \beta  u_{j})^{j - 2m}   
                                              \mathcal{GQ}^{(n)}_{\nu_{i}-1} (u_{i}|\b) \;  
                                              \mathcal{GQ}^{(n)}_{\nu_{j}-1} (u_{j}|\b) 
                  \right. \right.    \medskip \\
     &    \left. \left.   \hspace{11.6cm}  
                                  \times         \dfrac{u_{j} \ominus  u_{i}} 
                                                        {u_{j}  \oplus  u_{i}} 
                                           \right )  
                                   \right )_{1 \leq i < j \leq 2m}    \medskip \\
      & =  \mathrm{Pf} 
                \left ( 
                      [u_{i}^{-\nu_{i}} u_{j}^{-\nu_{j}} ] 
                          \left ( 
                                 \displaystyle{\sum_{k=0}^{\infty}  \sum_{l=0}^{\infty}} 
                                       \binom{i + 1 - 2m}{k}  \binom{j - 2m}{l}  
                     \beta^{k +l}  u_{i}^{k} u_{j}^{l}  \right.   \right. \medskip \\
    &   \hspace{6.3cm}  
                             \left.    \left.  
                                        \times  \;     \mathcal{GQ}^{(n)}_{\nu_{i}-1} (u_{i}|\b) \;  
                                                          \mathcal{GQ}^{(n)}_{\nu_{j}-1} (u_{j}|\b)
                                            \cdot  \dfrac{u_{j} \ominus u_{i}} 
                                           {u_{j}  \oplus u_{i}}  
                          \right ) 
                \right )_{1 \leq i < j \leq 2m}    \medskip \\
   & =   \mathrm{Pf}  \left ( 
                                            \displaystyle{\sum_{k=0}^{\infty} \sum_{l=0}^{\infty}}   
                                              \displaystyle{\binom{i + 1 - 2m}{k}  \binom{j - 2m}{l}} 
                                                  \beta^{k + l} 
                                                          GQ_{(\nu_{i} + k,  \nu_{j} + l)}^{(\nu_{i}, \nu_{j})} (\x_{n}|\b)  
                              \right )_{1 \leq i < j \leq 2m}. 
\end{array} 
\end{equation*} 
Thus we obtained the following: 
\begin{theorem}  [Pfaffian formula for $GQ_{\nu}(\x_{n}|\b)$]  \label{thm:PfaffianFormulaGQ}
For a strict partition $\nu$ of length $2m$,  we have 
\begin{equation*}  
   GQ_{\nu}  (\x_{n} |\b)  =  \mathrm{Pf}  \left ( 
                                            \displaystyle{\sum_{k=0}^{\infty} \sum_{l=0}^{\infty}}   
                                              \displaystyle{\binom{i + 1 - 2m}{k}  \binom{j - 2m}{l}} 
                                                  \beta^{k + l} 
                                                          GQ_{(\nu_{i} + k,  \nu_{j} + l)}^{(\nu_{i}, \nu_{j})} (\x_{n}|\b)  
                              \right )_{1 \leq i < j \leq 2m}. 
\end{equation*}   
\end{theorem}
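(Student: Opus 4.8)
The plan is to mimic, in the factorial setting, the computation already carried out for $GQ_{\nu}(\x_{n})$ in \S\ref{subsubsec:PfaffianFormulasQ(x)GQ(x)}, replacing the generating function $\overline{Q}^{\L}(\bm{u}_{2m})$ there by its factorial counterpart $\overline{Q}^{\L}(\bm{u}_{2m}|\b)^{\nu}$ from Corollary \ref{cor:GFUFSPQF}. First I would specialize the universal formal group law $F_{\L}(u,v)$ to the multiplicative one $F_{m}(u,v) = u + v + \beta uv$, so that $\overline{Q}^{\L}(\bm{u}_{2m}|\b)^{\nu}$ becomes $\overline{GQ}(\bm{u}_{2m}|\b)^{\nu} = \prod_{i=1}^{2m} GQ(u_{i}^{-1}|\b)^{(\nu_{i}-1)}\prod_{1\le i<j\le 2m}\frac{u_{j}\ominus u_{i}}{u_{j}\oplus u_{i}}$, with $GQ(u|\b)^{(k)}$ as defined just before the theorem. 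By Corollary \ref{cor:GFUFSPQF}, $GQ_{\nu}(\x_{n}|\b)$ is the coefficient of $\bm{u}^{-\nu}$ in this expression, so everything reduces to rewriting $\overline{GQ}(\bm{u}_{2m}|\b)^{\nu}$ as a Pfaffian.

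The key algebraic input is the identity $\mathrm{Pf}\left(\dfrac{x_{j}-x_{i}}{x_{j}\oplus x_{i}}\right)_{1\le i<j\le 2m} = \prod_{1\le i<j\le 2m}\dfrac{x_{j}-x_{i}}{x_{j}\oplus x_{i}}$ recalled from Ikeda-Naruse \cite[Lemma 2.4]{Ikeda-Naruse2013}. Exactly as in the non-factorial case, I would first extract the scalar factors $(1+\beta u_{i})^{-(2m-i)}$ by writing $\dfrac{u_{j}\ominus u_{i}}{u_{j}\oplus u_{i}} = \dfrac{1}{(1+\beta u_{i})}\cdot\dfrac{u_{j}-u_{i}}{u_{j}\oplus u_{i}}$ over the range $i<j$, apply the Pfaffian identity to $\left(\dfrac{u_{j}-u_{i}}{u_{j}\oplus u_{i}}\right)$, and then absorb the $GQ(u_{i}^{-1}|\b)^{(\nu_{i}-1)}$-factors and the $(1+\beta u_{i})$-powers into the matrix entries. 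After rebalancing the exponents so that each entry $(i,j)$ of the skew-symmetric matrix of size $2m$ carries $(1+\beta u_{i})^{i+1-2m}(1+\beta u_{j})^{j-2m}$ (the asymmetry here being the delicate point), this yields
\[
   \overline{GQ}(\bm{u}_{2m}|\b)^{\nu} = \mathrm{Pf}\left((1+\beta u_{i})^{i+1-2m}(1+\beta u_{j})^{j-2m} GQ(u_{i}^{-1}|\b)^{(\nu_{i}-1)} GQ(u_{j}^{-1}|\b)^{(\nu_{j}-1)}\cdot\dfrac{u_{j}\ominus u_{i}}{u_{j}\oplus u_{i}}\right).
\]

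Next I would expand each $(1+\beta u_{i})^{i+1-2m}$ and $(1+\beta u_{j})^{j-2m}$ by the generalized binomial theorem $(1+x)^{n} = \sum_{k\ge 0}\binom{n}{k}x^{k}$ valid for any integer $n\in\Z$, producing a double sum $\sum_{k,l\ge 0}\binom{i+1-2m}{k}\binom{j-2m}{l}\beta^{k+l}u_{i}^{k}u_{j}^{l}$ inside each entry. Finally, since taking the coefficient of $\bm{u}^{-\nu}$ acts on each matched pair of rows/columns independently, the coefficient-extraction operator $[\bm{u}^{-\nu}]$ commutes with $\mathrm{Pf}$: applying $[u_{i}^{-\nu_{i}}u_{j}^{-\nu_{j}}]$ entrywise and recognizing the result via the definition of $GQ_{(k,l)}^{(p,q)}(\x_{n}|\b)$ gives $[u_{i}^{-\nu_{i}}u_{j}^{-\nu_{j}}]\left(\sum_{k,l}\binom{i+1-2m}{k}\binom{j-2m}{l}\beta^{k+l}u_{i}^{k}u_{j}^{l}\,GQ(u_{i}^{-1}|\b)^{(\nu_{i}-1)}GQ(u_{j}^{-1}|\b)^{(\nu_{j}-1)}\cdot\frac{u_{j}\ominus u_{i}}{u_{j}\oplus u_{i}}\right) = \sum_{k,l}\binom{i+1-2m}{k}\binom{j-2m}{l}\beta^{k+l}GQ_{(\nu_{i}+k,\nu_{j}+l)}^{(\nu_{i},\nu_{j})}(\x_{n}|\b)$, which is precisely the claimed Pfaffian. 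The main obstacle I anticipate is the bookkeeping in the middle step: verifying that the $(1+\beta u_{i})$-powers can be redistributed so that the matrix is genuinely skew-symmetric with the stated asymmetric exponents $i+1-2m$ versus $j-2m$, and justifying rigorously that coefficient extraction may be moved inside the Pfaffian; both are routine but require care, and they are exactly the points where the corresponding non-factorial argument in \S\ref{subsubsec:PfaffianFormulasQ(x)GQ(x)} and \cite[Proof of Lemma 6.13]{HIMN2016(v3)} should be followed closely.
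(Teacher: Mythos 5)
Your proposal is correct and follows essentially the same route as the paper: specialize Corollary \ref{cor:GFUFSPQF} to $F_{m}$, rewrite $\overline{GQ}(\bm{u}_{2m}|\b)^{\nu}$ as a Pfaffian via the identity of Ikeda--Naruse \cite[Lemma 2.4]{Ikeda-Naruse2013} with the same redistribution of the $(1+\beta u_{i})$-powers as in the non-factorial case, expand by the generalized binomial theorem, and extract coefficients entrywise using the definition of $GQ_{(k,l)}^{(p,q)}(\x_{n}|\b)$. The exponents $i+1-2m$ and $j-2m$ you identify as the delicate point are exactly those appearing in the paper's computation.
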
   

\begin{rem}    \label{rem:GFGQ}   
\begin{enumerate} 
\item  Putting $\bm{b} = \bm{0}$ in $(\ref{eqn:GFFGQ})$,    
we  obtain  a generating function for the $($non-factorial$)$ 
$K$-theoretic $Q$-polynomials $GQ_{\nu}  (\x_{n})$.  
On the other hand,  dual $K$-theoretic $P$- and $Q$-polynomials was  introduced 
in our previous papers \cite[\S 5]{Nakagawa-Naruse2016},  \cite{Nakagawa-Naruse2018(arXiv)}.  
We have a conjecture on a generating function for 
the dual $K$-theoretic $Q$-polynomials, and their Pfaffian formula 
$($see Appendix  \S $\ref{subsec:ConjectureGFgq})$.

\item  The generating function technique can also be applied to the derivation of 
the determinantal formula for factorial Grothendieck 
polynomials $G_{\lambda}(\bm{x}_{n}|\bm{b})$.
On the other hand, a generating function for the dual Grothendieck polynomials $g_{\lambda}(\bm{x}_{n})$
$($for their definition, see Lascoux--Naruse \cite{Lascoux-Naruse2014}$)$ can be obtained by a 
purely algebraic manner.  We shall give the details in the Appendix, \S $\ref{subsec:GFDGP}$.   
\end{enumerate}  
\end{rem}

\section{Appendix}   \label{sec:Appendix}   
\subsection{Generating function for the dual Grothendieck polynomials}    \label{subsec:GFDGP}  
As we mentioned in Remark \ref{rem:GFGQ},  we give a generating function 
for the dual Grothendieck polynomials.   
Following  Lascoux--Naruse \cite{Lascoux-Naruse2014},   let us  introduce the 
dual  Grothendieck polynomials   $g_{\lambda}(\y_{n})$, where 
$\y_{n} = (y_{1}, y_{2}, \ldots, y_{n})$ is a  set of independent variables 
 and $\lambda \in \mathcal{P}_{n}$.  
First we need some notation: 
Given two sets of variables  (called {\it alphabets} as usual) 
   $\mathbf{A}$, $\mathbf{B}$, the {\it complete functions}  $s_{k} (\mathbf{A} - \mathbf{B}) 
  \; (k  = 0, 1, 2, \ldots)$ are given by the following generating function: 
          \begin{equation*} 
            \sum_{k=0}^{\infty} s_{k}(\mathbf{A} - \mathbf{B}) z^{k}    =  \prod_{a \in \mathbf{A}} \dfrac{1}{1 - az}  
                                                              \prod_{b \in \mathbf{B}}  (1 - bz).    
           \end{equation*} 
     In particular,  when we add  $r$ letters specialized to $1$, namely the set  
           $\{  \underbrace{1, 1, \ldots, 1}_{r} \}$,  to one of the alphabets $\mathbf{A}$ or $\mathbf{B}$, 
we have 
     \begin{equation*}    \label{enq:s(A-Bpmr)}
           \sum_{k=0}^{\infty}  s_{k} (\mathbf{A} - \mathbf{B} \pm r)  z^{k}    
        =   (1 - z)^{\mp r}  \prod_{a \in \mathbf{A}} \dfrac{1}{1 - az}   \prod_{b \in \mathbf{B}} (1 - bz).  
   \end{equation*} 
Then,  for the variables $\y_{n} = (y_{1}, \ldots, y_{n})$ and any integer $r$, we have 
  \begin{equation*}  
  \begin{array}{llll}  
    \displaystyle{\sum_{k=0}^{\infty}}   s_{k} (\y_{n} + r)  z^{k} 
   & =  (1 - z)^{-r}   \displaystyle{\prod_{i=1}^{n}}  \dfrac{1}{1 - y_{i}z}    \medskip \\  
   & =   \left (   \displaystyle{\sum_{i =0}^{\infty}}  \binom{-r}{i}  (-z)^{i}  \right ) 
            \left ( \displaystyle{\sum_{j=0}^{\infty}}   h_{j}(\y_{n})  z^{j}  \right )  \medskip  \\ 
  & =    \left (  \displaystyle{\sum_{i=0}^{\infty}}   (-1)^{i} \binom{r + i - 1}{i} (-z)^{i}  \right ) 
           \left (  \displaystyle{\sum_{j=0}^{\infty}}   h_{j}(\y_{n})  z^{j}  \right )   \medskip   \\ 
   & =    \displaystyle{\sum_{k=0}^{\infty}}  
              \left (  \sum_{i = 0}^{k}  \binom{r + i-1}{i}  h_{k-i}  (\y_{n})  \right )  z^{k},   \medskip 
  \end{array}  
\end{equation*} 
and hence we have 
\begin{equation}   \label{eqn:s_k(y_n+r)}  
     s_{k}  (\y_{n}  + r)   =  \sum_{i=0}^{k}  \binom{r + i-1}{i}  h_{k-i} (\y_{n}) \quad 
(k = 0, 1, \ldots).  
\end{equation} 
Using  (\ref{eqn:s_k(y_n+r)}), the dual  Grothendieck polynomial 
$g_{\lambda} (\y_{n})$ for $\lambda \in \mathcal{P}_{n}$  of length $r$  is given 
by (see Lascoux--Naruse \cite[(3)]{Lascoux-Naruse2014}):  
          \begin{equation}    \label{eqn:DefinitionDSGP}
          \begin{array}{llll}  
         g_{\lambda}(\y_{n})   &  =  s_{\lambda}(\y_{n}, \y_{n} + 1,  \ldots, \y_{n} + n-1)   \medskip \\
                                      & =  \det  (s_{\lambda_{i} - i + j} (\y_{n} + i-1))_{1 \leq i, j \leq r}  \medskip \\
                                 & =  \det \left (  
                                                                \displaystyle{\sum_{k=0}^{\lambda_{i} -i + j}}
                                                                    \binom{(i-1) + k-1}{k}  
                                                                               h_{\lambda_{i} - i + j - k} (\y_{n})  
                                                 \right )_{1 \leq i, j \leq r} \medskip \\
                                  & = \det \left ( 
                                                     \displaystyle{\sum_{k =  0}^{\infty}}  
                                                              \binom{i + k-2}{k}  h_{\lambda_{i} - i + j - k} (\y_{n}) 
                                               \right )_{1 \leq i, j \leq r}.    
           \end{array}
            \end{equation}

We set 
\begin{equation*} 
\begin{array}{llll} 
    H(z) & = H^{(n)}(z) 
       :=   \displaystyle{\prod_{j=1}^{n}}    \dfrac{1}{1 - y_{j}z}     
         =  \displaystyle{\sum_{k=0}^{\infty}}  h_{k}(\y_{n}) z^{k},  \medskip \\
    g (\bm{z}_{r})  & = g(z_{1}, \ldots, z_{r}) 
         :=       \displaystyle{\prod_{i=1}^{r}}  H(z_{i})  \prod_{1 \leq i < j \leq r} \dfrac{z_{i} \ominus z_{j}}{z_{i}}.  
\end{array} 
\end{equation*} 
We shall show that $g(\bm{z}_{r})$ is the generating function for the dual  Grothendieck polynomials, namely we have the following: 
\begin{theorem}[Generating function for $g_{\lambda}(\y_{n})$]     \label{thm:GFDGP} 
  For a partition $\lambda = (\lambda_{1}, \ldots, \lambda_{r})$ of length $\ell (\lambda) 
= r \leq n$, the dual Grothendieck polynomial $g_{\lambda}(\y_{n})$ is the 
coefficient of $\bm{z}^{\lambda} = z_{1}^{\lambda_{1}} z_{2}^{\lambda_{2}} \cdots z_{r}^{\lambda_{r}}$ in $g (z_{1}, z_{2}, \ldots, z_{r})$. Thus 
\begin{equation*}
            g_{\lambda} (\y_{n})  = [\bm{z}^{\lambda}]  (g (\bm{z}_{r})). 
\end{equation*} 
\end{theorem} 
\begin{proof}  
By the Vandermonde determinant formula, we have 
\begin{equation*} 
\begin{array}{lll} 
    \displaystyle{\prod_{1 \leq i < j \leq r}}  \dfrac{z_{i} \ominus z_{j}} {z_{i}} 
   & =   \displaystyle{\prod_{i=1}^{r-1}} \prod_{j=i + 1}^{r}  \dfrac{1}{1  + \beta z_{j}} \cdot 
                                                            \dfrac{z_{i} - z_{j}}{z_{i}}  \medskip \\  
   &  =  \displaystyle{\prod_{i=1}^{r-1}}  \dfrac{1}{(1 + \beta z_{i+1}) \cdots (1 + \beta z_{r})}  
            \cdot \prod_{i=1}^{r-1}  \dfrac{1}{z_{i}^{r-i}}  
           \cdot \prod_{1 \leq i  < j \leq r}  (z_{i} - z_{j})  \medskip \\ 
   &  =  \displaystyle{\prod_{i=1}^{r}} \dfrac{1}{(1 + \beta z_{i})^{i-1}}  
                \cdot \prod_{i=1}^{r} \dfrac{1}{z_{i}^{r-i}}  \cdot \det (z_{i}^{r-j})_{1 \leq i, j \leq r}   \medskip \\ 
   & =   \det  ((1 + \beta z_{i})^{1-i}  z_{i}^{i-j})_{1 \leq i, j \leq r}.  \medskip  
\end{array} 
\end{equation*} 
Therefore one can compute 
\begin{equation*}  
   g(z_{1}, \ldots, z_{r})  =   \displaystyle{\prod_{i=1}^{r}} H(z_{i})  \prod_{1 \leq i < j \leq r}  \dfrac{z_{i} \ominus z_{j}} {z_{i}}  
   =  \det  ((1 + \beta z_{i})^{1-i}  z_{i}^{i-j}  H(z_{i}))_{1 \leq i, j \leq r}.   
\end{equation*} 
Extracting the  coefficient of the monomial  $\bm{z}^{\lambda}  = \prod_{i=1}^{r} z_{i}^{\lambda_{i}}$, 
  we obtain 
\begin{equation*} 
\begin{array}{llll}  
   [\bm{z}^{\lambda}]  (g (z_{1}, \ldots, z_{r}))   
& =    \left  [ \displaystyle{\prod_{i=1}^{r}}  z_{i}^{\lambda_{i}}   \right ]  
        \left  (  \det  ((1 + \beta z_{i})^{1 - i} z_{i}^{i-j}  H(z_{i}))_{1 \leq i, j \leq r}  \right ) \medskip \\
 & =   \det   \Big (   
                                    [z_{i}^{\lambda_{i}} ] ((1 + \beta z_{i})^{1-i}  z_{i}^{i-j}  H(z_{i})  
                 \Big )_{1 \leq i, j \leq r}   \medskip \\
  & =  \det   \left  ( [z_{i}^{\lambda_{i}}]  \left   (
                                 \displaystyle{\sum_{k =  0}^{\infty} } \binom{1-i}{k} \beta^{k} z_{i}^{k}  
                  \cdot z_{i}^{i-j}  H(z_{i})  
                                                      \right )      
                    \right )_{1 \leq i, j \leq r}    \medskip    \\
   & =   \det \left (   
                            \displaystyle{\sum_{k =  0}^{\infty}}   \binom{1-i}{k}  \beta^{k}  
                                                h_{\lambda_{i} - i + j - k} (\y_{n}) 
                     \right )_{1 \leq i, j \leq r}   \medskip  \\
  & = \det  \left (  
                      \displaystyle{\sum_{k = 0}^{\infty}}   \binom{i + k-2}{k}  (-\beta)^{k}  h_{\lambda_{i} - i + j - k} (\y_{n})  
                \right )_{1 \leq i,  j \leq r}.  \medskip 
\end{array} 
\end{equation*}  
Here we used the  following identity:  
\begin{equation*} 
   \binom{1-i}{k}  =  \binom{-(i-1)}{k}   =  (-1)^{k} \binom{i + k-2}{k}, 
\end{equation*} 
for   integers $i \geq 1$, $k \geq 0$.
This is the dual  Grothendieck polynomial  
$g_{\lambda}(\y_{n})$  introduced in (\ref{eqn:DefinitionDSGP}) with $\beta = -1$.  
\end{proof}

\subsection{Conjecture on a generating function for $gq_{\nu} (\y_{n})$}    \label{subsec:ConjectureGFgq}

 In \cite[\S 3.4]{Ikeda-Naruse2013},  Ikeda--Naruse introduced the $K$-theoretic $P$- and $Q$-functions $GP_{\nu}(\bm{x})$ and 
$GQ_{\nu}(\bm{x})$ in countably many variables $\bm{x} = (x_{1}, x_{2}, \ldots)$.  
Let $G\Gamma' (\bm{x})$ denote the ring of symmetric functions satisfying the $K$-theoretic $Q$-cancellation property $($see 
\cite[Definition 1.1]{Ikeda-Naruse2013}$)$. Similarly,   let $G \Gamma (\bm{x})$ denote the subring of  $G \Gamma' (\bm{x})$ 
 consisting of functions $f$ satisfying the condition$:$$f(t, x_{2}, \ldots )  - f(0, x_{2}, \ldots)$ is divisible by 
$t \oplus t$.\footnote{
We slightly changed the notation  from that used in \cite{Ikeda-Naruse2013}. 
In that paper,   $G \Gamma' (\bm{x})$ and $G \Gamma (\bm{x})$ are written as $G \Gamma$ and 
$G \Gamma_{+}$ respectively.
}     Then, they showed that  $GP_{\nu}(\bm{x})$'s   and $GQ_{\nu} (\bm{x})$'s    
$(\nu$ strict$)$ form a formal $\Z [\beta]$-basis 
of $G \Gamma' (\bm{x})$ and $G \Gamma (\bm{x})$ respectively.  
Using this  ``basis theorem'' and   the following  ``Cauchy kernel'' 
\begin{equation*} 
    \Delta (\bm{x}; \bm{y}) = \prod_{i=1}^{\infty} \prod_{j=1}^{\infty} \dfrac{1 - \overline{x}_{i} y_{j}} {1 - x_{i} y_{j}}, 
\end{equation*} 
where $\bm{y} = (y_{1}, y_{2}, \ldots)$ is another set of independent variables, 
we can define the {\it dual $K$-theoretic $P$- and $Q$-functions}, denoted by 
 $gp_{\nu}(\bm{y})$ and   $gq_{\nu}(\bm{y})$,  as follows (see also Nakagawa--Naruse \cite[Definition 5.3, Remark 5.4]{Nakagawa-Naruse2016}):     
\begin{defn} [Dual $K$-theoretic Schur $P$- and $Q$-functions]   \label{df:Definitiongpgq}
Let $\mathcal{S P}$ denote the set of all strict partitions.  We define 
$gp_{\nu}(\bm{y})$ and $gq_{\nu}(\bm{y})$ for a strict partition $\nu \in \mathcal{S P}$ by the 
following identities$:$  
\begin{equation}  \label{enq:Definitiongpgq} 
   \Delta (\bm{x}; \bm{y}) 
   = \displaystyle{\prod_{i=1}^{\infty}}   \prod_{j=1}^{\infty} \dfrac{1 - \overline{x}_{i} y_{j}} {1 - x_{i} y_{j}}
     =  \displaystyle{\sum_{\nu \in \mathcal{S P}}}    GP_{\nu}(\bm{x})  gq_{\nu} (\bm{y})    
     =   \displaystyle{\sum_{\nu \in \mathcal{S P}}}    GQ_{\nu}(\bm{x})  gp_{\nu} (\bm{y}).     
\end{equation} 
\end{defn}  
 One can check that $gp_{\nu}(\bm{y})$ and $gq_{\nu}(\bm{y})$ are actually symmetric functions, i.e., 
they are elements of $\Lambda (\bm{y}) \otimes \Z[\beta]$, where $\Lambda (\bm{y})$ is the ring of 
symmetric functions in the variables $\bm{y} = (y_{1}, y_{2}, \ldots)$ over $\Z$.  
For each  positive integer $n$,  one can define a surjective ring homomorphism 
$\rho^{(n)}: \Lambda (\bm{y}) \twoheadlongrightarrow \Lambda (\bm{y}_{n})$  
by  putting $y_{n + 1} = y_{n + 2} = \cdots = 0$.  
Here $\Lambda (\bm{y}_{n})  = \Z[y_{1}, \ldots, y_{n}]^{S_{n}}$ is the ring of symmetric polynomials 
in $\bm{y}_{n}  = (y_{1}, \ldots, y_{n})$ under the usual action by the symmetric group $S_{n}$.  
We also denote by $\rho^{(n)}$ its extension over $\Z[\beta]$.  
Then we define the {\it dual $K$-theoretic Schur $P$- and $Q$-polynomials}, 
denoted by $gp_{\nu}(\bm{y}_{n})$ and $gq_{\nu}(\bm{y}_{n})$ for a strict partition $\nu$
of length $\leq n$,  by $gp_{\nu}(\bm{y}_{n}) = \rho^{(n)} (gp_{\nu}(\bm{y}))$ 
and $gq_{\nu}(\bm{y}_{n}) = \rho^{(n)}(gq_{\nu}(\bm{y}))$ 
respectively.  

Next we set 
\begin{equation*} 
\begin{array}{llll} 
    gq (z) =  \displaystyle{\prod_{j=1}^{n}} 
           \dfrac{1 - y_{j} \overline{z}}  {1 - y_{j} z}  =  \sum_{k=0}^{\infty}  gq_{k}(\bm{y}_{n})  z^{k}, \medskip \\
    gq (\bm{z}_{r})  = gq(z_{1}, \ldots, z_{r}) 
 :=  \displaystyle{\prod_{i=1}^{r}}   gq (z_{i})  \prod_{1 \leq i <j \leq r}  \dfrac{z_{i} \ominus z_{j}} {z_{i} \oplus z_{j}}. 
\medskip 
\end{array} 
\end{equation*} 
Then we make the following conjectures: 
\begin{conj} [Generating function for $gq_{\nu}(\bm{y}_{n})$]  \label{enq:GFDKQP}  
For  a strict partition  $\nu = (\nu_{1}, \ldots, \nu_{r})$ 
of length $\ell (\nu)  = r \leq n$,  the dual $K$-theoretic $Q$-polynomial 
$gq_{\nu}(\bm{y}_{n})$  is the coefficient of $\bm{z}_{r}  = z_{1}^{\nu_{1}} z_{2}^{\nu_{2}} \cdots z_{r}^{\nu_{r}}$ 
in $gq(z_{1}, \ldots, z_{r})$.   Thus 
\begin{equation*} 
     gq_{\nu}(\bm{y}_{n})   =  [\bm{z}^{\nu}]   (gq (\bm{z}_{r})).      
\end{equation*} 
\end{conj}  
We have checked that the above conjecture holds for $r \leq 2$.  
As a corollary to the above conjecture, we immediately obtain the following formula: 
\begin{cor}[Pfaffian formula for $gq_{\nu}(\bm{y}_{n})$]  
For a strict partition $\nu$  of  length $2m$,   we have 
\begin{equation*}  
  gq_{\nu} (\bm{y}_{n})  =   \mathrm{Pf}  \left ( \sum_{k=0}^{i-1} \sum_{l=0}^{j}  \beta^{k + l}  
          \binom{i-1}{k} \binom{j}{l}  gq_{(\nu_{i} - k,  \nu_{j} - l)}  (\bm{y}_{n}) \right )_{1 \leq i < j \leq 2m}.    
\end{equation*} 
\end{cor}


\end{document}